\newtheorem{thm}{Theorem}[section]
\newtheorem{lem}[thm]{Lemma}
\newtheorem{prop}[thm]{Proposition}
\newtheorem{cor}[thm]{Corollary}
\newtheorem{df}[thm]{Definition}
\newtheorem{ass}[thm]{Assumption}
\newtheorem{question}[thm]{Question}
\newtheorem{obs}[thm]{Observation}
\numberwithin{equation}{section}
\newcommand{\E}{\mathbf{E}}
\newcommand{\M}{\mathcal{M}}
\newcommand{\B}{\mathcal{B}}
\newcommand{\prob}{\mathbf{P}}
\newcommand{\R}{\mathbb{R}}
\newcommand{\W}{\mathsf{W}}
\DeclareMathOperator{\Var}{Var}
\newcommand{\Ber}{\mathrm{Ber}}
\renewcommand{\M}{\mathsf{M}}
\renewcommand{\B}{\mathsf{B}}
\renewcommand{\setminus}{\mathbin{\fgebackslash}}
\newcommand{\bB}{\mathbb{B}}
\newcommand{\cL}{\mathcal{L}}
\newcommand{\cC}{\mathcal{C}}
\newcommand{\dist}{\mathrm{dist}}
\begin{document}
	\title{Correlation Decay for Maximum Weight Matchings on Sparse Graphs}
	\author{Wai-Kit Lam\thanks{National Taiwan University. Email: waikitlam@ntu.edu.tw. The research of W.-K. L. is supported by the National Science and Technology Council in Taiwan grant 113-2115-M-002-009-MY3.} 
		\and Arnab Sen\thanks{University of Minnesota. Email: arnab@umn.edu.  The research of A.-S. is partly supported by Simons Foundation MP-TSM-00002716.} }
	\date{}
	\maketitle

\begin{abstract}
We study correlation decay for the maximum weight matching problem on sparse graphs with i.i.d.\ edge weights. We show exponential decay of correlations when the underlying graphs are locally tree-like with uniformly bounded degree and the edge weights are exponential. We also prove a polynomial rate of decay of correlations for any finite graph with maximum degree at most three, again for exponential edge weights. As consequences of the correlation decay property, we obtain the existence of the maximum weight matching on infinite graphs, local weak convergence of the maximum weight matching, and a law of large numbers for its total weight.
\end{abstract}

\section{Introduction}
For a graph $G=(V(G),E(G))$, a matching $M\subseteq E(G)$ is a set of edges such that no two edges share a common vertex. When $G$ is equipped with edge weights $(w_e)_{e\in E(G)}$, the weight of a matching $M$, denoted by $w_M$, is defined by
\[
w_M := \sum_{e\in M} w_e .
\]
A maximum weight matching (MWM) of $G$, which we write as $\M_G$, is a matching of $G$ with maximum total weight, that is, 
$\M_G = \arg\max_{M} w_M$. 
Finding a MWM is a classical problem in combinatorial optimization and can be solved in polynomial time, for example, by Edmonds' blossom algorithm~\cite{Edmonds1965Paths}. To analyze random instances of this optimization problem, one may assume that the edge weights are i.i.d.\ from a common continuous distribution, so that the resulting MWM is almost surely unique. In the mean-field setting, the solution is expected to be replica symmetric, a feature that sets this combinatorial optimization problem apart from models such as random $k$-SAT and the Sherrington--Kirkpatrick spin glass, where replica symmetry breaking plays a central role. Based on the assumption of replica symmetry, M\'ezard and Parisi obtained remarkably detailed predictions for the minimal weight perfect matching in the mean-field setting, most notably for the random assignment problem on the complete bipartite graph~\cite{mezard1985replicas,mezard1986mean,mezard1987solution}. Several of these predictions were later established rigorously~\cite{aldous1992asymptotics,aldous2001zeta,wastlund2012replica}.

In this paper we focus on the sparse regime, where the underlying graph $G$ has uniformly bounded degree. The replica symmetric behavior of the MWM in this diluted setting remains largely unexplored. A central question is whether the MWM on sparse graphs exhibits decay of correlations, which would be a manifestation of replica symmetric behavior. One of the main aims of this work is to formulate this as a precise mathematical problem and to obtain partial results in this direction.

To formalize the correlation decay question, we introduce a few notations. For a graph $G=(V(G),E(G))$ and vertices $u,v\in V(G)$, we write $u\sim_G v$ if $(uv)\in E(G)$. For distinct edges $e,e'\in E(G)$, we write $e\sim_G e'$ if they share a common endpoint. When the underlying graph is clear from context, we omit the subscript and write $u\sim v$ and $e\sim e'$. For an edge $e = (uv)$ of a graph $G$ and $r\geq 0$, we define $\bB_e^r(G)$ to be the subgraph of $G$ induced by all the vertices in $V(G)$ whose graph distance from either $u$ or $v$ is at most $r$. Also, if $H$ is a subgraph of $G$, we define its (vertex) boundary as
\[
\partial H := \{v\in V(H): v\sim_G u \text{ for some $u\in V(G)\setminus V(H)$}\}.
\]

For $F \subseteq V(G)$, $G\setminus F$ denotes the induced subgraph of $G$ on the vertex set $V(G) \setminus F$. Similarly, for $E \subseteq E(G)$, we denote by $G \setminus E$ the subgraph $(V(G), E(G) \setminus E)$.

Throughout the paper, we work under the following assumption on the edge weights.
\begin{ass} \label{assumption: edge weight}
$(w_e)_{e\in E(G)}$ are i.i.d.\ nonnegative random variables from a continuous distribution $F$.
\end{ass}

Let $G$ be a locally finite, connected graph. For each $e \in E(G)$ and $r \ge 1$, define
\begin{equation}
    \label{eq: corr_decay}
    \varrho_r(G, e) := \E \sup_{A\subseteq \partial \bB_e^r(G)} |\mathbf{1}_{\{e \in \M_{\bB_e^r(G)}\}} - \mathbf{1}_{\{e \in \M_{\bB_e^r(G) \setminus A}\}}|.
    \end{equation}

Let $\mathcal{G}_{D}$ be the class of finite connected graphs with maximum degree bounded above by $D.$
\begin{question}\label{q:uniform_corr_decay} Fix any weight distribution $F$ satisfying Assumption~\ref{assumption: edge weight}. For any $D \ge 2$, is it true that
\begin{equation}\label{qes:corr_decay}
  \lim_{r \to \infty} \sup_{ G \in \mathcal{G}_{D}} \max_{ e \in E(G)}  \varrho_r(G, e) = 0\ ?  
\end{equation}
\end{question}
\noindent If the answer is yes,  one can also ask for quantitative bounds on the rate of convergence. 

For a finite graph $G$ and $e \in E(G),$ one can easily check (see Section~\ref{subsection:obs}) that
\begin{equation}
\label{eq: rough_corr_decay}
\E |\mathbf{1}_{\{e \in \M_G\}} - \mathbf{1}_{\{e \in \M_{\bB_e^r(G)}\}}| \le \varrho_r(G, e).
\end{equation}
So, if $\varrho_r(G, e) \approx 0$, then the event $\{e \in \M_G\}$ is approximately determined by only the weights of the local neighborhood of $e$. In other words, there is no long-range correlation in the MWM, so \eqref{qes:corr_decay} can be viewed as a form of correlation decay.

We believe that the answer to the above question is affirmative. First, the classical Heilmann--Lieb theory \cite{HeilmannLieb} shows that the (weighted) monomer-dimer model, which is a positive temperature relaxation of the weighted matching problem, has no phase transition at any temperature. In \cite{lamsenpositivetemp} it is shown, using van den Berg's disagreement percolation argument, that the disordered monomer-dimer model on bounded-degree graphs exhibits exponential decay of correlations. It is therefore reasonable to expect that replica-symmetric behavior continues to hold at zero temperature, or equivalently when $\beta = \infty$, for the disordered model. However, let us remark that the correlation decay exponent obtained in \cite{lamsenpositivetemp} tends to zero as $ \beta \to \infty$, so this result does not by itself yield correlation decay at zero temperature. Second,  there is no intrinsic algorithmic barrier to finding a MWM, since it can be computed in polynomial time. This is in contrast with the maximum-weight independent set problem, which is computationally hard, and the corresponding hard-core model is also known to have a phase transition.

\subsection{Main results}
Our main results give an affirmative answer to Question~\ref{q:uniform_corr_decay} for two subclasses of graphs with exponential edge weights: (a) locally tree-like graphs with uniformly bounded degree, and (b) graphs with maximum degree at most three. In both cases, we obtain explicit bounds on the rate of decay, stated in the next two theorems.
\begin{thm}
    \label{thm: temp_corr_decay}
    Let $G$ be a connected graph whose degree is bounded above by $D$. Suppose that the edge weights are i.i.d.\ exponential random variables with mean $1$. Let $e \in E(G)$ and $r\geq 3$ be such that $\bB_e^r(G)$ is a tree. 
    Then we have
    \[
    \varrho_r(G, e)  \leq 2D(1 - (2D)^{-D})^{r-2}.
    \]
\end{thm}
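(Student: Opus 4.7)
The plan is to reformulate the problem in terms of the max-plus dynamic-programming recursion that computes the maximum weight matching on trees, and then to exploit this recursion together with a probabilistic \emph{blocking event} at each vertex to derive the claimed geometric decay in $r$.

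For each oriented edge $x \to y$ in a tree $T$, define the ``message''
\[
B_{x \to y}(T) := \max_{c \sim x,\, c \ne y} (w_{xc} - B_{c \to x}(T))^+,
\]
with the convention that the maximum over the empty set is $0$. A standard calculation then yields the characterization $\{e \in \M_T\} = \{w_e > B_{u \to v}(T) + B_{v \to u}(T)\}$ for $e = (uv)$ and $T = \bB_e^r(G)$. Because $w_e$ is $\mathrm{Exp}(1)$, independent of the two $B$-values, and has density at most $1$ on $[0,\infty)$, conditioning on those $B$-values and their counterparts in $T \setminus A$ gives
\[
\varrho_r(G,e) \ \le \ 2\,\E \sup_{A_u} \bigl|B_{u \to v}(T_u) - B_{u \to v}(T_u \setminus A_u)\bigr| \ + \ 2\,\E \sup_{A_v} \bigl|B_{v \to u}(T_v) - B_{v \to u}(T_v \setminus A_v)\bigr|,
\]
where $T_u, T_v$ are the rooted subtrees obtained by removing $e$ from $T$, and $A_u, A_v$ are the restrictions of $A$ to their leaf boundaries.

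Next, I would prove a monotonicity lemma for $A \mapsto B(u; T_u \setminus A)$: treating each removed leaf as having $B = +\infty$, the recursion shows that adding one boundary leaf to $A$ changes $B$ at its parent by decreasing, at the grandparent by increasing, and so on, with sign alternating level by level. Since every boundary leaf sits at the same depth $r$, these effects combine coherently, so $B(u; T_u \setminus A)$ is monotone in $A$ (set-wise), and $\sup_{A_u}$ is attained at $A_u = \partial T_u$. The task thus reduces to bounding $\E |B(u; T_u^{(r)}) - B(u; T_u^{(r-1)})|$, where $T_u^{(k)}$ denotes $T_u$ truncated to depth $k$.

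I would then establish the geometric decay
\[
\E \bigl|B(u; T_u^{(r)}) - B(u; T_u^{(r-1)})\bigr| \ \lesssim \ (1 - (2D)^{-D})^{r-2}
\]
by a recursive blocking argument. Call a vertex $x$ \emph{blocking} if every edge from $x$ to its children in $T_u$ has weight below a threshold of order $1/D$; using the elementary inequality $1 - e^{-t} \ge t/2$ for $t \in [0,1]$, the blocking event at $x$ has probability at least $(2D)^{-D}$. When $x$ is blocking, the recursion forces $B(x; T_x^{(k)}) \in [0, O(1/D)]$ for every $k$, so $|B(x; T_x^{(k)}) - B(x; T_x^{(k-1)})|$ is likewise $O(1/D)$ regardless of what happens deeper. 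Upward propagation then uses the pointwise inequality
\[
\bigl|B(x; T_x^{(k)}) - B(x; T_x^{(k-1)})\bigr| \ \le \ \bigl|B(c_\ast; T_{c_\ast}^{(k-1)}) - B(c_\ast; T_{c_\ast}^{(k-2)})\bigr|
\]
with $c_\ast$ the argmax child in the computation on $T_x^{(k-1)}$, tracing a single descendant per level. Since blocking events at distinct vertices involve disjoint sets of children-edges and are thus independent, composing over the $r-2$ interior levels produces the $(1 - (2D)^{-D})^{r-2}$ factor; the prefactor $2D$ absorbs the two-sided split at $e$ together with the at most $D$-way max at the root.

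The main obstacle is this last step. The naive $1$-Lipschitz bound only gives $|B(x; T_x^{(k)}) - B(x; T_x^{(k-1)})| \le \max_c |B(c; T_c^{(k-1)}) - B(c; T_c^{(k-2)})|$, and a union bound over children would cost a factor of $D$ per level that would swamp the blocking decay. The single-child argmax refinement above sidesteps this loss pointwise, but the identity of $c_\ast$ depends on the same edge weights that appear in the blocking event, so taking the expectation requires a careful conditioning argument to avoid reintroducing a factor of $D$. A secondary technical point is the case analysis inside the pointwise inequality when one of the $(\cdot)^+$ terms vanishes on one side of the comparison; this is handled by a short per-case check showing that the bound persists with the same argmax child.
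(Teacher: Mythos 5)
Your setup (the message characterization of $\{e\in\M_T\}$, the alternating monotonicity in the boundary set $A$, and the split into the two rooted subtrees at $u$ and $v$) matches the paper's, but the key decay step does not close. The problem is that your blocking event only \emph{caps} the discrepancy, it does not \emph{kill} it: on the event that all edges from $x$ to its children have weight $O(1/D)$, you get $B(x;T_x^{(k)})\in[0,O(1/D)]$ for every $k$, hence $|B(x;T_x^{(k)})-B(x;T_x^{(k-1)})|\le O(1/D)$ --- but not $=0$. Propagating upward with the (correct) $1$-Lipschitz argmax inequality, the best you can extract is
\[
\E\,\Delta_{\mathrm{root}}\;\le\;\prob(\text{no blocking vertex on the argmax path})\cdot\Delta_{\max}\;+\;O(1/D),
\]
so the final bound is $(1-(2D)^{-D})^{r-2}+O(1/D)$, a quantity that does not tend to $0$ as $r\to\infty$. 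To make a percolation-style argument work you would need a local event on which the two bonuses at $x$ are \emph{equal} regardless of the boundary condition; the natural candidate is $\{\B(x)=0\text{ under both boundary conditions}\}$ (i.e., $x$ unmatched in its subtree), but that event depends on the entire subtree below $x$, not on the child edge weights alone, so it is neither local nor independent across levels. The secondary issue you flag --- that the identity of the argmax child $c_\ast$ is correlated with the blocking events below it --- is also unresolved and is not a minor technicality: the path you trace is weight-dependent, so you cannot simply multiply the per-level blocking probabilities.

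For contrast, the paper avoids both difficulties by never controlling $\E|B^{\infty}-B^{0}|$ at all. Using the memoryless property it passes to the scalar quantities $p(e_v)=\prob(w_{e_v}>\B(v))=\E e^{-\B(v)}$, which satisfy a \emph{deterministic} recursion $p(e_v)=\Phi(p(e_{v_1}),\dots,p(e_{v_{d_v}}))$ up the tree; after the reparametrization $q=-\log p$ it proves the $\ell_1$-gradient bound $\|\nabla\Psi\|_1\le\prob(X_v>0)\le 1-(2D)^{-D}$ (this is where a probability-$(2D)^{-D}$ ``all child indicators vanish'' event enters, playing the role your blocking event was meant to play, but at the level of the deterministic recursion rather than pathwise). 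Composing these gradient bounds along the tree gives contraction of the boundary-to-root map, and only at the very end is the difference of the two $p$-values converted back into the correlation-decay bound. If you want to salvage your approach, you would need either a genuinely discrepancy-killing local event, or to switch from pathwise differences to the Laplace-transform/probability parametrization as the paper does.
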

In \cite{Gamarnik}, correlation decay is established for random $d$-regular graphs and Erd\H{o}s--R\'enyi graphs $G(n,\lambda/n)$ with exponential weights. The local limits of these graphs are infinite $d$-regular trees or Poisson Galton--Watson trees, respectively, which are self-similar or stochastically self-similar. On such trees, the cavity computation yields a recursive distributional equation, and correlation decay follows once one proves that this equation has a unique solution, as shown in \cite{Gamarnik} for exponential weights. Recently, the MWM problem was studied in \cite{enriquez2025optimal} for finite graphs whose local limit is a unimodular Galton--Watson tree. While \cite{enriquez2025optimal} does not explicitly address correlation decay, it shows that the recursive distributional equation has a unique solution for any non-atomic measure with a nontrivial absolutely continuous part. Let us emphasize that these results do not typically extend to deterministic irregular trees, where each vertex yields a different distributional equation. As far as we know, correlation decay arguments on irregular trees are not common in the literature. 
Let us also mention that in the positive temperature case, for the (unweighted) monomer-dimer model on finite graphs whose local limit is a unimodular Galton-Watson tree, the limiting monomer density was computed via cavity recursion in \cite{alberici2014solution}.

\begin{thm}
    \label{thm: corr_decay_degree_3}
    Let $G$ be a connected graph with maximum degree at most $3$. Suppose that the edge weights are i.i.d.\ exponential random variables with mean $1$. Let $e\in E(G)$. Then  for any $r\geq 1$ and for any $\varepsilon  \in (0, 1)$,
    \[
   \varrho_r(G, e)   \leq 6 \big((1-\varepsilon)^{r} + \varepsilon \big).
    \]
    In particular, setting $\varepsilon = \log{r}/r$, we obtain that 
    \[
    \varrho_r(G, e)  \leq  \frac{18\log{r}}{r} \quad \text{for all $r\geq 2$.}
    \]
\end{thm}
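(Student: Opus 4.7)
The plan is to analyze the symmetric difference $M\triangle M'$ with $M=\M_{\bB_e^r(G)}$ and $M'=\M_{\bB_e^r(G)\setminus A}$ for an arbitrary $A\subseteq \partial \bB_e^r(G)$. If $\mathbf{1}_{e\in M}\ne \mathbf{1}_{e\in M'}$ then $e$ lies in some alternating component of $M\triangle M'$. Since $M'$ matches no vertex of $A$ while $M$ generically does, alternating cycles through $e$ contradict simultaneous optimality of the two matchings (they would force the weights around the cycle to balance exactly, a probability-zero event under Assumption~\ref{assumption: edge weight}), so $e$ must lie on an alternating path $P$ with at least one endpoint in $A$. In particular, $P$ has graph-length at least $r$ from $e$.

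Next I would explore $P$ edge by edge starting from $e$. Because $G$ has maximum degree $3$, at each interior vertex of $P$ there are at most two candidates for the next edge, so the path is pinned down by matching decisions rather than by free combinatorial choice. Simultaneous optimality of $M$ on $\bB_e^r(G)$ and of $M'$ on $\bB_e^r(G)\setminus A$ yields two families of weight inequalities: swapping along any prefix of $P$ cannot strictly improve either matching, modulo a single boundary correction at the far endpoint in $A$. Equivalently, letting $f_1=e,f_2,\dots$ denote the edges of $P$ and $S_k=\sum_{i\le k}(-1)^i w_{f_i}$, the partial sums $S_k$ must track narrow windows dictated by the (still unrevealed) far-boundary correction.

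The probability bound then splits into two pieces matching the form $(1-\varepsilon)^r+\varepsilon$. Let $\Delta_e$ denote the natural ``gap'' random variable that governs whether $e\in \M_{\bB_e^r(G)}$. For $\mathrm{Exp}(1)$ weights, $\Delta_e$ has density bounded at $0$ uniformly in the rest of the ball, which yields the anti-concentration estimate $\prob(\Delta_e\le\varepsilon)\le\varepsilon$; this is the source of the additive $\varepsilon$ (the ``knife-edge'' event on which $e$'s status is already critically sensitive). On the complementary event $\{\Delta_e>\varepsilon\}$, a flip of the status of $e$ forces the boundary perturbation to propagate through an $\varepsilon$-tight chain of $r$ matching decisions along $P$; using memorylessness of $\mathrm{Exp}(1)$, each decision survives with probability at most $1-\varepsilon$ independently of the past, producing the factor $(1-\varepsilon)^r$. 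A direct enumeration shows that there are at most $6$ alternating configurations at $e$ to union bound over (two choices for which of $M,M'$ contains $e$, times the admissible first-step directions at $u$ and $v$), giving the constant $6$. The specialization $\varepsilon=\log r/r$ together with $(1-\log r/r)^r\le e^{-\log r}=1/r$ produces the corollary $\varrho_r(G,e)\le 18\log r/r$ for $r\ge 2$.

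The main obstacle is to realize the per-step $(1-\varepsilon)$ contraction rigorously when $G$ has cycles. On a tree, the cavity recursion $\Delta_v=(\max_{u\sim v}(w_{uv}-\Delta_u))_+$ decouples the forward exploration into a Markov chain driven by fresh independent exponential weights, which is exactly the structure the argument above requires; this is essentially the content of Theorem~\ref{thm: temp_corr_decay}. On a general degree-$3$ graph, however, the gap variable at a vertex depends on the entire ball, and weights revealed along $P$ may feed back into the far-boundary correction through a short cycle. I expect to overcome this by an unfolding argument along $P$ combined with a combinatorial bound on cycle contributions, using the degree-$3$ constraint to keep the number of cycle re-entries small and the bounded density at $0$ of each fresh exponential weight to dampen their effect.
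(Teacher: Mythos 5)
Your proposal is not a proof: the central step --- the per-edge $(1-\varepsilon)$ contraction along the alternating path $P$ --- is precisely the part you defer (``I expect to overcome this by an unfolding argument\ldots''), and it is the part that fails as stated. The claim that ``each decision survives with probability at most $1-\varepsilon$ independently of the past'' has no justification on a graph with cycles: the path $P$ is itself a function of all the weights, the matching decisions along $P$ are determined by bonuses that depend on the entire ball (including weights already ``revealed''), and there is no product structure to exploit. The quantity $\Delta_e$ and the event $\{\Delta_e\le\varepsilon\}$ are also never defined in a way that connects the additive $\varepsilon$ to the multiplicative $(1-\varepsilon)^r$; the decomposition of the bound into these two pieces is reverse-engineered from the statement rather than derived. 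Finally, the constant $6$ does not come from enumerating ``alternating configurations at $e$''.

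For comparison, the paper's proof avoids path-tracing entirely. It uses the vertex-deletion recursion $\B(u,H)=\max_i\bigl(w_{(uu_i)}-\B(u_i,H\setminus\{u\})\bigr)_+$, which is valid on \emph{any} graph, together with the key observation that the weights $w_{(uu_i)}$ are independent of the (possibly mutually dependent) bonuses $\B(u_i,H\setminus\{u\})$. Combined with the memoryless property (Lemma~\ref{lem:exp_indentity}), this yields an exact formula for $\E[e^{-\B(u,H)}]$ conditional on the children's bonuses. The degree-$3$ hypothesis enters only through the fact that each $u_i$ has degree at most $2$ in $H\setminus\{u\}$, so the formula has at most two terms. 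One then sandwiches the true bonus after any boundary perturbation between the local quantities $\B_r^0$ and $\B_r^\infty$ of Section~\ref{sec: local_bounds} and shows that $M_r=\max|\E e^{-\B_r^0}-\E e^{-\B_r^\infty}|$ satisfies $M_r\le(1-\varepsilon)M_{r-1}+3\varepsilon^2$; the factor $1-\varepsilon$ comes from the coefficient $1-\tfrac23 e^{-\B}$ being at most $1-\varepsilon$ except on an event of probability at most $3\varepsilon$ (stochastic domination of the bonus by a maximum of two exponentials), not from any independence along a path. The conclusion $\varrho_r\le 2M_r\le 6((1-\varepsilon)^r+\varepsilon)$ then follows from the criterion $e\in\M_H\iff w_e>\B(u,H\setminus\{e\})+\B(v,H\setminus\{u\})$. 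If you want to salvage your approach, you would need to supply the missing decoupling argument on graphs with cycles, which is exactly what the analytic recursion is designed to circumvent.
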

While the assumption that the maximum degree is bounded above by $3$ is very restrictive, the above theorem does include the interesting example of the two-dimensional hexagonal lattice (which contains short cycles).

A result of similar flavor was proved in \cite{gamarnik2010ptas} for the maximum weight independent set. It constructed a randomized PTAS (polynomial-time approximation scheme) that approximates the total weight of an optimal independent set on arbitrary graphs of maximum degree $3$ with exponential weights.
Their proof established correlation decay not on the original graph, but on a random subgraph obtained via a high-density site percolation, which still sufficed to approximate the total weight within a $(1\pm \epsilon)$ factor. By passing to this subgraph, they reduced the average degree to at most $3-\delta$ for some $\delta>0$, which played a crucial role in proving contractivity of the cavity recursion. In our case, we carried out a more delicate analysis of the recursion that allowed vertices of degree $3$.  Interestingly, they also showed that, for sufficiently large maximum degree, no PTAS for approximating the total weight exists unless P$=$NP, in stark contrast with maximum weight matching, where the total weight can be computed exactly in polynomial time.

A recent remarkable paper \cite{KrishnanRay} proves a two-point correlation decay for the maximum weight matching on the $d$-dimensional torus $\mathbb{T}_n^d$ for a class of absolutely continuous weight distributions, including the Gaussian law. However, their proof relies heavily on unimodular techniques, so no explicit rate of correlation decay is obtained, and the argument does not seem to generalize to non-unimodular graphs.

The proofs of Theorems~\ref{thm: temp_corr_decay} and \ref{thm: corr_decay_degree_3} rely heavily on the memoryless property of the exponential distribution, and extending these arguments beyond the exponential case remains an open problem. It would also be interesting to investigate whether the bounded degree assumption in Theorem~\ref{thm: temp_corr_decay} can be replaced by a `stochastically bounded degree' assumption, which would cover more general (random) sparse graphs such as $G(n,\lambda/n)$.

Instead of `uniform' correlation decay for finite graphs as in Question \ref{q:uniform_corr_decay}, one may also ask whether the correlation decay holds for an infinite graph, which is stated in the assumption below.

\begin{ass}[Correlation decay assumption for infinite graph] \label{assumption: corr_decay_general_graph}
A  locally finite and connected graph  $G$ is said to satisfy the correlation decay if  for each $e \in E(G)$, we have
    \begin{equation}
    \label{eq: corr_decay_general_graph}
    \varrho_r(G, e) \to 0 \quad \text{as $r\to\infty$.}
    \end{equation}
\end{ass}
\noindent Note that the correlation decay assumption trivially holds when $G$ is a finite graph.  By Theorem~\ref{thm: temp_corr_decay} and Theorem~\ref{thm: corr_decay_degree_3}, if the weights are exponential, then the correlation decay assumption holds 
when $G$ is an infinite tree of bounded degree or $G$ is an infinite graph with maximum degree $3$.

Next, we derive several consequences of the correlation-decay assumption. First, under correlation decay, the notion of the MWM extends naturally to infinite graphs.

\begin{prop}[MWM on an infinite graph]
\label{thm: MWM_inf_graph}
    Let $G$ be a locally finite infinite connected graph satisfying Assumption~\ref{assumption: corr_decay_general_graph}. Fix $e \in E(G)$. Let $(G_n)$ be a sequence of finite graphs that increases to~$G$. The following statements hold almost surely.
 	\begin{enumerate}
          \item[(a)]   \[
    \lim_{n\to\infty} \mathbf{1}_{\{e\in \M_{G_n}\}} \quad \text{exists.}
    \]
    \end{enumerate}
Let $\M_G \subseteq E(G)$ be the set of all edges $e\in E(G)$ such that $\lim_{n \to \infty} \mathbf{1}_{\{e\in \M_{G_n}\}} = 1$.
    \begin{enumerate}
	\item[(b)]  $\M_G$ does not depend on the choice of $(G_n)$. More precisely, if $(G_n')$ is another sequence of finite graphs that increases to $G$, then 
		\[
		\lim_{n \to \infty} \mathbf{1}_{\{e \in \M_{G_n'}\}} = \lim_{n \to \infty} \mathbf{1}_{\{e \in \M_{G_n}\}}. 
		\]
  
		\item[(c)] $\M_G$ is a matching.
		\item[(d)] (local optimality) For any matching $M$ of $G$ such that 
        $|M \triangle \M_G| < \infty$ and $M\neq \M_G$, we have 
        \[\sum_{ e \in \M_G \setminus M}  w_e > \sum_{ e \in M \setminus \M_G} w_e.\]
        	\end{enumerate}
\end{prop}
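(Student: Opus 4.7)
The plan is to use the correlation-decay hypothesis together with the following elementary \emph{restriction} observation for MWMs: if $H \subseteq G'$ are finite graphs with unique MWMs, and $A \subseteq \partial_{G'} H$ is the (random) set of boundary vertices of $H$ matched in $\M_{G'}$ to a vertex outside $V(H)$, then $\M_{G'} \cap E(H) = \M_{H \setminus A}$, since any strictly heavier matching on $H \setminus A$ could be substituted into $\M_{G'}$ without disturbing the rest, contradicting maximality. Fix $e$ and write $H_r := \bB_e^r(G)$. Applied with $G' = G_n \supseteq H_r$, this identity yields $\mathbf{1}_{\{e \in \M_{G_n}\}} = \mathbf{1}_{\{e \in \M_{H_r \setminus A_n^{(r)}}\}}$ for some random $A_n^{(r)} \subseteq \partial H_r$. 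The $\{0,1\}$-valued variable
\[
Z_r \;:=\; \sup_{A \subseteq \partial H_r} \bigl|\mathbf{1}_{\{e \in \M_{H_r}\}} - \mathbf{1}_{\{e \in \M_{H_r \setminus A}\}}\bigr|
\]
satisfies $\E Z_r = \varrho_r(G,e) \to 0$, so I would pass to a subsequence $(r_k)$ with $\sum_k \prob(Z_{r_k}=1) < \infty$ and invoke Borel--Cantelli: almost surely some $Z_{r_k}$ vanishes, and on that event $\mathbf{1}_{\{e \in \M_{G_n}\}} = \mathbf{1}_{\{e \in \M_{H_{r_k}}\}}$ for all sufficiently large $n$. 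This proves (a); it proves (b), because the limiting value is a function only of the weights in $H_{r_k}$ and hence the same for every exhaustion; and it proves (c), because two adjacent edges both in $\M_G$ would both lie in $\M_{G_n}$ for large $n$, contradicting that $\M_{G_n}$ is a matching.

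For (d), let $D := M \triangle \M_G$, finite by hypothesis, and let $N$ be the finite set of edges of $G$ incident to some vertex of $V(D)$. Applying (a) to every $f \in N$ simultaneously yields an a.s.\ event on which, for all large $n$, $\mathbf{1}_{\{f \in \M_{G_n}\}} = \mathbf{1}_{\{f \in \M_G\}}$ for every $f \in N$. On that event I set $M_n := \M_{G_n} \triangle D$. Using the alternating-path-and-cycle decomposition of $D$ together with the simultaneous agreement on $N$, one checks vertex by vertex in $V(D)$ that $M_n$ is a matching of $G_n$, and a direct weight computation gives
\[
w(M_n) - w(\M_{G_n}) \;=\; w(M \setminus \M_G) - w(\M_G \setminus M).
\]
If the right-hand side were nonnegative, maximality of $\M_{G_n}$ together with a.s.\ uniqueness of the MWM under continuous weights would force $M_n = \M_{G_n}$, hence $D = \varnothing$ and $M = \M_G$, contradicting $M \neq \M_G$. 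Therefore $w(\M_G \setminus M) > w(M \setminus \M_G)$.

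The main obstacle is the matching check for $M_n$ in (d). It would be tempting to hope that correlation decay at the edges of $D$ itself suffices, but a vertex $v$ that is an endpoint of an alternating path in $D$ and is unmatched by $\M_G$ could in principle be matched by $\M_{G_n}$ to an unrelated vertex outside $V(D)$; for such a configuration $M_n = \M_{G_n} \triangle D$ would have two incident edges at $v$ and fail to be a matching. Enlarging the controlled set from $D$ to the finite edge-neighborhood $N$ of $V(D)$, and invoking part (a) simultaneously at every edge of $N$, is what makes the swap legal; beyond that, the remainder is routine bookkeeping.
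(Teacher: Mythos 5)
Your proof is correct. For (a)--(c) you follow essentially the same route as the paper: the restriction identity (the paper's Observation~\ref{obs: forbidden_vertices}) converts $\mathbf{1}_{\{e\in\M_{G_n}\}}$ into $\mathbf{1}_{\{e\in\M_{H_r\setminus A_n^{(r)}}\}}$, and correlation decay removes the dependence on $A_n^{(r)}$; the paper deduces a.s.\ convergence from the monotonicity of $X_k=\sup_{m,n\ge k}|\cdots|$ combined with convergence in probability, whereas you use Borel--Cantelli along a subsequence $(r_k)$ --- a cosmetic difference, though your version has the small advantage of exhibiting the limit as the local quantity $\mathbf{1}_{\{e\in\M_{H_{r_k}}\}}$, which makes (b) and (c) immediate. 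The genuine divergence is in (d). The paper chooses $m$ so that $\M_{G_m}$ agrees with $\M_G$ only on the edges of $D=M\triangle\M_G$, asserts the identity $\M_G\triangle M=\M_{G_m}\triangle M_m$ with $M_m=M\cap E(G_m)$, and then invokes the augmenting-path characterization (Lemma~\ref{lem: augmenting}); you instead demand agreement on the larger, still finite, set $N$ of all edges meeting $V(D)$, build the explicit competitor $M_n=\M_{G_n}\triangle D$, check vertex by vertex that it is a matching, and compare weights directly against $\M_{G_n}$. Your extra care is well placed: the scenario you flag (an endpoint of an alternating path in $D$ that is unmatched in $\M_G$ but matched by $\M_{G_n}$ to a vertex outside $V(D)$) is exactly the configuration that agreement on $E(D)$ alone does not rule out, and controlling the full edge-neighborhood $N$ is what certifies both that the swap yields a matching and that $D\cap\M_{G_n}=\M_G\setminus M$ in the weight computation. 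In exchange, you bypass the augmenting-path lemma entirely, relying only on maximality and a.s.\ uniqueness of $\M_{G_n}$; the paper's route is shorter on the page but leans on the symmetric-difference identity, which as written needs the same kind of enlarged agreement to be fully justified.
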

We refer to the matching $\M_G$ constructed above as the MWM of the infinite graph $G$, which can be realized as a measurable function of the edge weights. By (b), $\M_G$ can be viewed as the unique infinite-volume ground state.

Let us mention two related results from the literature. In \cite{KrishnanRay}, the authors establish the uniqueness of the `metastate' for MWM on $\mathbb{Z}^d$. In their framework, instead of proving that for almost every quenched realization of the weights $w$ there is a single optimal matching satisfying local optimality, they construct a joint distribution on $w$ and an optimal matching such that, for almost every $w$, the conditional distribution of the optimal matching given $w$ is supported on a single optimal matching. In \cite{enriquez2025optimal}, the authors construct a unimodular matching on a unimodular Galton--Watson tree. This matching is unique in law and is optimal among all unimodular matchings with respect to the weight density at the root. However, their construction is again not quenched, and only holds in distribution.

Our next result shows that if a sequence of finite (possibly random) graphs converges locally weakly and the limiting unimodular random graph $G$ satisfies the correlation decay assumption for almost every realization of $G$, then the MWM on the finite graphs converges locally weakly to the MWM on the limit. We will refer the reader to Appendix~\ref{sec: local_weak_convergence} for the definition of local weak convergence and unimodularity.

\begin{prop}[Local weak convergence of MWM]
    \label{lem: local_weak_convergence_matching}
    Let  $(G_n)$ be a sequence of random graphs such that $(G_n, o_n)$  converges locally weakly to $(G, o)$, where $o_n$ is uniformly chosen from $V(G_n)$.
    Suppose that almost surely under the law of the random graph $(G, o),$ $G$ satisfies the correlation decay assumption. Then the sequence $(G_n, o_n, (w_e)_{e\in E(G_n)}, (\mathbf{1}_{\{e \in \M_{G_n}\}})_{e \in E(G_n)})$ converges locally weakly to $(G, o, (w_e)_{e\in E(G)}, (\mathbf{1}_{\{e \in \M_{G}\}})_{e \in E(G)})$.
\end{prop}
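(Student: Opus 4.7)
The plan is to reduce local weak convergence of the marked graphs to showing, for every fixed $R\ge 1$ and every edge $e$ in the $R$-ball around the root, that $\mathbf{1}_{\{e \in \M_{G_n}\}} \to \mathbf{1}_{\{e \in \M_{G}\}}$ in probability. Once this one-edge convergence holds, local weak convergence of the full marked object follows by a routine argument: the additional marks take values in $\{0,1\}$ and only finitely many edges lie inside any fixed ball around $o$, so joint convergence in probability of the matching indicators upgrades to convergence in law of the ball as a marked rooted graph. I would begin by invoking Skorokhod's representation theorem to realize the convergence of $(G_n,o_n,(w_e))$ to $(G,o,(w_e))$ as edge-weighted rooted graphs almost surely, so that for every $r$ there is an a.s.\ finite random $N_r$ with $\bB_{o_n}^r(G_n) \cong \bB_o^r(G)$ (as edge-weighted graphs) for all $n \ge N_r$; from here on I identify corresponding edges under these isomorphisms.

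The technical heart of the argument is an infinite-graph analogue of \eqref{eq: rough_corr_decay}, namely
\[
\E\bigl|\mathbf{1}_{\{e\in\M_G\}} - \mathbf{1}_{\{e \in \M_{\bB_e^r(G)}\}}\bigr| \le \varrho_r(G,e)
\]
for each $e \in E(G)$ and $r \ge 1$. To establish this, I fix any exhaustion $H_m \uparrow G$ by finite subgraphs and apply \eqref{eq: rough_corr_decay} with $H_m$ in place of the finite graph, then observe that once $m$ is large enough that $\bB_e^r(H_m) = \bB_e^r(G)$, both the right-hand side $\varrho_r(H_m,e)$ and the local indicator $\mathbf{1}_{\{e \in \M_{\bB_e^r(H_m)}\}}$ are eventually independent of $m$ and equal their $G$-counterparts. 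Parts (a) and (b) of Proposition~\ref{thm: MWM_inf_graph} then give $\mathbf{1}_{\{e \in \M_{H_m}\}} \to \mathbf{1}_{\{e\in\M_G\}}$ almost surely, and bounded convergence yields the inequality.

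For the main estimate, fix $R$, take $r\ge R$, and consider any edge $e$ in $\bB_o^R(G)$. Applying \eqref{eq: rough_corr_decay} directly to $G_n$ gives
\[
\E\bigl|\mathbf{1}_{\{e\in\M_{G_n}\}} - \mathbf{1}_{\{e\in\M_{\bB_e^r(G_n)}\}}\bigr| \le \varrho_r(G_n,e).
\]
On the event $\{n \ge N_r\}$ the coupling identifies $\bB_e^r(G_n)$ with $\bB_e^r(G)$, so $\varrho_r(G_n,e) = \varrho_r(G,e)$ and the $r$-local indicator equals its $G$-counterpart; on the complement the integrand is bounded by $1$ and the probability vanishes. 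Combining with the infinite-graph bound above via the triangle inequality gives
\[
\limsup_{n\to\infty}\E\bigl|\mathbf{1}_{\{e\in\M_{G_n}\}} - \mathbf{1}_{\{e\in\M_G\}}\bigr| \le 2\,\E\bigl[\varrho_r(G,e)\bigr].
\]
Letting $r \to \infty$ and invoking the correlation decay assumption on $G$ together with dominated convergence (justified by $\varrho_r \le 1$) yields the one-edge $L^1$, hence in-probability, convergence.

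The main obstacle I expect is the bookkeeping around the randomness of the limit $(G,o)$ itself. The correlation decay assumption holds only for almost every realization of the unimodular random graph, so each of the estimates above should be read as a conditional statement given $(G,o)$ before being integrated; one must verify that $\varrho_r(G,e)$ is a well-defined measurable function of the weights on $\bB_e^r(G)$ and that the $r \to \infty$ dominated-convergence step survives the outer average over $(G,o)$. Simultaneous convergence over all edges in a fixed $R$-ball around $o$ follows by a routine union bound, since such balls are almost surely finite under local finiteness of the limit.
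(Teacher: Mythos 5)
Your proposal is correct in substance but packages the argument differently from the paper. The paper's proof is a compactness argument: it uses tightness of the $\{0,1\}$-marked graphs to extract a subsequential local weak limit $(G,o,w,M)$, then identifies $M=\M_G$ by writing $\mathbf{1}_{\{e_n\in\M_{G_n}\}}$ \emph{exactly} as $\mathbf{1}_{\{e_n\in\M_{\bB_{e_n}^r(G_n)\setminus\mathsf{A}^r_n}\}}$ via Observation~\ref{obs: forbidden_vertices}, showing the boundary sets $\mathsf{A}^r_n$ stabilize under the Skorokhod coupling, and then invoking correlation decay on $G$ along a subsequence $r_k$ together with Proposition~\ref{thm: MWM_inf_graph}(b). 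You instead prove direct one-edge $L^1$ convergence by a triangle inequality through $\M_{\bB_e^r}$, which requires two ingredients not isolated in the paper: the inequality \eqref{eq: rough_corr_decay} applied to the finite graphs $G_n$ themselves (with $\varrho_r(G_n,e_n)$ controlled by continuity of $\varrho_r$ under local convergence, since it is a functional of the unweighted ball with the weights integrated out), and an infinite-volume analogue $\E|\mathbf{1}_{\{e\in\M_G\}}-\mathbf{1}_{\{e\in\M_{\bB_e^r(G)}\}}|\le\varrho_r(G,e)$, which you derive correctly from an exhaustion plus Proposition~\ref{thm: MWM_inf_graph}(a)--(b). Your route avoids the tightness/subsequence step and yields a quantitative $\limsup$ bound; the paper's route avoids having to compare $\varrho_r(G_n,\cdot)$ with $\varrho_r(G,\cdot)$ across the coupling. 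Both are valid.

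Two small imprecisions to fix in a write-up. First, on the coupling event the $r$-local indicators do not \emph{equal} their $G$-counterparts: the metric $d^w_{\mathrm{loc}}$ only forces the weights on the matched balls to be close, so you need the a.s.\ continuity of $w\mapsto\mathbf{1}_{\{e\in\M_{\bB_e^r(G)}\}}$ (valid off the null set of ties, since the weight law is continuous) to conclude that the indicators converge; the paper's proof needs the same fact at the analogous step. Second, $\varrho_r(G,e)$ depends on $\partial\bB_e^r(G)$ and hence on the $(r+1)$-ball, so the identification $\varrho_r(G_n,e_n)=\varrho_r(G,e)$ requires matching balls of radius $r+1$, not $r$. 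Neither point affects the validity of the argument.
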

This yields the following law of large numbers for the MWM’s total weight and edge count. Let $\W_G := w_{\M_G}$ denote the weight of the MWM of $G$, i.e., 
\[
\W_G = \sum_{e \in E(G)} w_e\mathbf{1}_{\{e \in \M_G\}}.
\]
\begin{cor}
	\label{thm: lln}
Let $(G_n)$ be a sequence of random graphs and  let $(G_n,o_n)$ converge locally weakly to $(G,o)$ where $o_n$ is  chosen uniformly from $V(G_n)$. Assume  that 
$\sup_n \E \tfrac{|E(G_n)|}{| V(G_n)|} < \infty$ and $\E|w_e|^p<\infty$ for some $p>1$. Further assume that, almost surely under the law of $(G,o)$, the limit graph $G$ satisfies the correlation decay assumption. Then
	\[
	\lim_{n\to\infty} \E \frac{\W_{G_n}}{|V(G_n)|} = \frac{1}{2}\E \left[ \sum_{v: v\sim o}w_{(ov)} \mathbf{1}_{\{(ov) \in \M_G\}} \right],
	\]
    and 
    \[
	\lim_{n\to\infty} \E \frac{|\M_{G_n}|}{|V(G_n)|} = \frac{1}{2}\E \left[ \sum_{v: v\sim o}\mathbf{1}_{\{(ov) \in \M_G\}} \right],
	\]
    where $|\M_{G_n}|$ is the number of edges in the matching $\M_{G_n}$ and the expectation $\E$ is taken over both the randomness in the graph and the edge weights.
\end{cor}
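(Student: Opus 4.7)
The plan is to express $\W_{G_n}/|V(G_n)|$ and $|\M_{G_n}|/|V(G_n)|$ as expectations of a local functional of the marked rooted graph $(G_n,o_n,(w_e),(\mathbf{1}_{\{e\in\M_{G_n}\}}))$, apply Proposition~\ref{lem: local_weak_convergence_matching} to obtain convergence in distribution, and then upgrade to convergence of expectations via a uniform integrability bound that relies on the matching constraint at the root.

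I would begin with the double-counting identity
\[
\W_{G_n}=\tfrac{1}{2}\sum_{v\in V(G_n)}\sum_{u\sim v}w_{(vu)}\mathbf{1}_{\{(vu)\in\M_{G_n}\}}.
\]
Since $o_n$ is uniform on $V(G_n)$ and independent of the weights,
\[
\E\frac{\W_{G_n}}{|V(G_n)|}=\tfrac{1}{2}\,\E\bigl[F(G_n,o_n,w,\mathbf{1}_{\M_{G_n}})\bigr],\quad F(G,o,w,\mathbf{1}_{\M}):=\sum_{u\sim o}w_{(ou)}\mathbf{1}_{\{(ou)\in \M\}}.
\]
The functional $F$ depends only on the $1$-neighborhood of the root and on the marks of the edges incident to $o$, so it is a continuous local function on the space of edge-marked rooted graphs. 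Proposition~\ref{lem: local_weak_convergence_matching} then gives $F(G_n,o_n,w,\mathbf{1}_{\M_{G_n}})\xrightarrow{d}F(G,o,w,\mathbf{1}_{\M_G})$.

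The main obstacle will be that $F$ is unbounded, so convergence in distribution alone is insufficient and uniform integrability has to be established. The crucial structural observation is that $\M_{G_n}$ is a matching, so at most one edge incident to $o_n$ has its indicator equal to $1$; consequently
\[
F(G_n,o_n,w,\mathbf{1}_{\M_{G_n}})\ \le\ \max_{u\sim o_n}w_{(o_n u)}\ \le\ \Bigl(\sum_{u\sim o_n}w_{(o_n u)}^{p}\Bigr)^{1/p}.
\]
Since $o_n$ is independent of the i.i.d.\ weights,
\[
\E\Bigl[\sum_{u\sim o_n}w_{(o_n u)}^{p}\Bigr]=\E[\deg(o_n)]\cdot\E[w_e^p]\le 2\Bigl(\sup_n\E\tfrac{|E(G_n)|}{|V(G_n)|}\Bigr)\E[w_e^p]<\infty
\]
by the hypotheses. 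Hence $\sup_n\E[F(G_n,o_n,w,\mathbf{1}_{\M_{G_n}})^{p}]<\infty$ with $p>1$, which supplies the required uniform integrability, and therefore $\E[F(G_n,\cdot)]\to\E[F(G,\cdot)]$.

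For the edge count, the same reduction applies with $F$ replaced by $F'(G,o,w,\mathbf{1}_{\M}):=\sum_{u\sim o}\mathbf{1}_{\{(ou)\in \M\}}$, which is bounded by $1$ by the matching property. Uniform integrability is automatic and the stated limit follows directly from Proposition~\ref{lem: local_weak_convergence_matching}.
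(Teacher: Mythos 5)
Your proposal is correct and follows essentially the same route as the paper: the double-counting identity to reduce to a root-local functional, Proposition~\ref{lem: local_weak_convergence_matching} plus continuity of that functional to get convergence in distribution, and the observation that the matching property leaves at most one nonzero term at the root, which combined with $\E|w_e|^p<\infty$ and the bounded expected degree yields the uniform integrability needed to pass to expectations. Your remark that the edge-count functional is simply bounded by $1$ is a slight (harmless) simplification of the paper's $p$-th moment bound for that term.
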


As an immediate consequence of Corollary~\ref{thm: lln} and Theorem~\ref{thm: temp_corr_decay}, the law of large numbers above applies to random graphs with i.i.d.\ degrees. More precisely, let $G_n$ be a uniformly chosen graph on $n$ vertices with prescribed degree sequence $(d_1,\ldots,d_n)$, where the $d_i$ are i.i.d.\ with distribution $\pi$  with bounded support. It is well known that $(G_n,o_n)$ converges locally weakly to a rooted unimodular Galton--Watson tree (see, e.g., \cite[Theorem~4.6]{vdHofstad}). Hence, if we assign i.i.d.\ $\mathrm{exp}(1)$ weights to the edges of $G_n$, the conclusion of Corollary~\ref{thm: lln} holds. 

On the other hand,  let $(G,o)$ be the hexagonal lattice, rooted at an arbitrary vertex $o$, and set $G_n=\bB_o^n(G)$, the subgraph of $G$ induced by all the vertices in $V(G)$ whose graph distance from $o$ is at most $n$. Then $(G_n,o_n)$ converges locally weakly to $(G,o)$. If we assign i.i.d.\ $\mathrm{exp}(1)$ weights to $E(G_n)$, Theorem~\ref{thm: corr_decay_degree_3} together with Corollary~\ref{thm: lln} yields a law of large numbers for the MWM on the hexagonal lattice.

The local weak limit of the MWM, as stated in Proposition~\ref{lem: local_weak_convergence_matching}, was established in \cite{enriquez2025optimal} without any correlation-decay assumption for general atomless weight distributions when the limit $G$ is a unimodular Galton--Watson tree. As a consequence, the same law of large numbers for random graphs with i.i.d.\ degrees is also proved in \cite{enriquez2025optimal} for general atomless weight distributions. A law of large numbers for the MWM had previously been established in \cite{Gamarnik} for random $d$-regular graphs and Erd\H{o}s--R\'enyi graphs $G(n,\lambda/n)$ with exponential weights.

Finally, Theorems~\ref{thm: temp_corr_decay} and \ref{thm: corr_decay_degree_3} allow us to obtain a central limit theorem for $ \W_{G_n} $ for any sequence of nonrandom graphs $(G_n)$ with  $|V(G_n)| \to \infty$ and with i.i.d.\ exponential edge weights, provided either (a) $G_n$ has uniformly bounded degree and girth $ g_n \to \infty $, or (b) $G_n$ has maximum degree at most $3$. In this setting, we have
\[ \frac{\W_{G_n} - \E \W_{G_n}}{\sqrt{\mathrm{Var}(\W_{G_n})}} \stackrel{d}{\to} N(0, 1). \]
The main input for this result is correlation decay. Once correlation decay is available, the proof follows the general approach of \cite{lamsenpositivetemp}, which applies Chatterjee's perturbative version of Stein's method \cite{Chatterjee14} and yields an explicit error bound in Kolmogorov--Smirnov distance. We omit the details. Central limit theorems for MWMs were also proved by a similar method in \cite{Cao} for Erd\H{o}s–R\'enyi graphs $G(n,\lambda/n)$ and in \cite{KrishnanRay} for $\mathbb{T}_n^d$.

\subsection{A brief sketch of proof}
We now give a brief outline of the proof of Theorem~\ref{thm: temp_corr_decay}. For a vertex $v$ in a graph $G$, we define its bonus by
\[
\B_v(G) := \W_G - \W_{G \setminus \{v\}} \in [0,\infty).
\]
Let $T$ be a finite tree, and let $v$ be a vertex of $T$. Denote by $T_v$ the subtree of $T$ rooted at $v$. Let $v_1,\ldots,v_d$ be the children of $v$, and let $T_1,\ldots,T_d$ be the corresponding subtrees rooted at $v_1,\ldots,v_d$. By the cavity recursion, the bonus at the root $v$ can be expressed in terms of the bonuses of these subtrees as
\[
\B_v(T_v) = \max_{1 \le i \le d} \bigl(w_{(v v_i)} - \B_{v_i}(T_i)\bigr)_+,
\]
where the bonuses $\B_{v_1}(T_1),\ldots,\B_{v_d}(T_d)$ are independent, since we are working on a tree. This distributional recursion depends on $d$, the number of children of $v$, so for an irregular tree, we do not have a single fixed point equation (in the limit), but rather a family of equations that vary from vertex to vertex. Correlation decay follows if we can show that the distribution of the bonus at the root of a large depth tree is only weakly sensitive to the boundary condition at the leaves. While this approach is quite natural, it is very challenging to analyze these recursions on the space of probability measures. 
  
Instead, under the assumption of exponential edge weights, we use the memoryless property to derive a new recursion for the quantities
\[
p(e) := \prob\bigl(w_e > \B_v\bigr),
\]
defined for an edge $e = (u v)$ where $u$ is the parent of $v$, in terms of $p(e_1),\ldots,p(e_d)$ for the child edges $e_i = (v v_i)$:
\begin{align*}
p(e)
&= \E\left[\frac{1}{1 + \xi(e_1) + \xi(e_2) + \cdots + \xi(e_d)}\right]
=: \Phi\bigl(p(e_1), p(e_2), \ldots, p(e_d)\bigr),
\end{align*}
where $\xi(e_1),\ldots,\xi(e_d)$ are independent with $\xi(e_i) \sim \mathrm{Ber}\bigl(p(e_i)\bigr)$. 

This reduces the recursion from the space of probability measures to a recursion on reals, which simplifies the analysis considerably. One way to obtain correlation decay is now to show that the map obtained by composing the operators $\Phi$ along the tree, which takes as input the boundary values at the leaves and outputs the $p$-value for the root edge, is a contraction. However, this naive approach fails, since
\[
\sup_{p \in [0,1]^d} \bigl\|\nabla \Phi(p_1,\ldots,p_d)\bigr\|_1
\]
grows with $d$ and cannot be bounded by $1$ for large (fixed) $d$. To overcome this, we perform the change of variables $q = -\log p$, which leads to a new recursion
\[
q(e) = \Psi\bigl(q(e_1), q(e_2), \ldots, q(e_d)\bigr)
:= - \log \Phi\bigl(e^{-q(e_1)}, \ldots, e^{-q(e_d)}\bigr).
\]
After this transformation, the resulting map, obtained by composing the operators $\Psi$, becomes contractive, which yields correlation decay. 

\subsection{Organization of the paper}
The rest of the paper is organized as follows. In Section~\ref{sec:prelim}, we record some elementary but important results on MWM with deterministic edge weights. We introduce the definition of bonus and explain how the MWM can be characterized in terms of bonuses. We derive the cavity recursion satisfied by the bonuses and prove some local bounds on the bonus. In Section~\ref{sec:cor_decay}, we prove our main correlation decay results, Theorems~\ref{thm: temp_corr_decay} and \ref{thm: corr_decay_degree_3}. In Section~\ref{sec_mwm_infinite}, we prove the results that follow from correlation decay, namely Propositions~\ref{thm: MWM_inf_graph} and \ref{lem: local_weak_convergence_matching} and Corollary~\ref{thm: lln}. The appendix collects some basic definitions and results on local weak convergence and unimodularity.\\

\noindent\textbf{Acknowledgment.} AS thanks David Gamarnik for helpful discussions, and Kesav Krishnan for pointing out the reference \cite{{alberici2014solution}}.

\section{Preliminaries}\label{sec:prelim}

In this section, we collect some useful facts about the MWM on a finite graph $G$. The results presented here do not rely on any randomness in the edge weights or the graph; however, we assume the weights are ``generic'', ensuring that $G$ has a unique MWM.

\subsection{Restriction of the maximum weight matching}\label{subsection:obs}

\begin{obs}
\label{obs: forbidden_vertices}
Let $G$ be a finite graph, and fix $e \in E(G)$ and $r\ge 1$. Let $\M_G$ be the maximum-weight matching of $G$, and let $\mathsf{A}$ be the set of vertices $v$ for which there exists $u$ with $(uv)\in \M_G$ but $(uv)\notin E(\bB_e^r(G))$. Then
\begin{equation}\label{eq: forbidden_vertices}
\mathbf{1}_{\{e\in \M_G\}}=\mathbf{1}_{\{e\in \M_{\bB_e^r(G)\setminus \mathsf{A}}\}}.
\end{equation}
\end{obs}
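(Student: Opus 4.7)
The plan is a standard swap argument. I would set $M := \M_G \cap E(\bB_e^r(G) \setminus \mathsf{A})$; as a subset of the matching $\M_G$, this is automatically a matching of $\bB_e^r(G) \setminus \mathsf{A}$. Then I would aim to show $M = \M_{\bB_e^r(G)\setminus \mathsf{A}}$, from which \eqref{eq: forbidden_vertices} follows immediately: both endpoints of $e$ lie in $V(\bB_e^r(G))$, so if $e \in \M_G$ then its endpoints are matched to each other by the edge $e \in E(\bB_e^r(G))$; since in a matching each vertex has a unique partner, neither endpoint satisfies the defining condition of $\mathsf{A}$, giving $e \in M$. Conversely $M \subseteq \M_G$, so $e \in M$ forces $e \in \M_G$.

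To prove $M = \M_{\bB_e^r(G)\setminus \mathsf{A}}$, I would argue by contradiction: suppose some matching $M'$ of $\bB_e^r(G)\setminus \mathsf{A}$ satisfies $w_{M'} > w_M$, and form the candidate
\[
M'' := M' \cup (\M_G \setminus M).
\]
If $M''$ is a matching of $G$, then $w_{M''} = w_{M'} + w_{\M_G \setminus M} > w_M + w_{\M_G \setminus M} = w_{\M_G}$, contradicting optimality of $\M_G$.

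The main (and really only nontrivial) step is checking that $M''$ has no vertex conflict, i.e.\ that no edge of $M'$ shares an endpoint with an edge of $\M_G \setminus M$. This is exactly where the definition of $\mathsf{A}$ does its work. Let $v$ be incident to an edge of $M'$, so $v \in V(\bB_e^r(G)) \setminus \mathsf{A}$, and suppose $v$ is matched in $\M_G$ by an edge $(uv)$. Since $v \notin \mathsf{A}$, the edge $(uv)$ must lie in $E(\bB_e^r(G))$, and in particular $u \in V(\bB_e^r(G))$. Moreover $u \notin \mathsf{A}$: otherwise $u$ would be incident in $\M_G$ to an edge leaving $\bB_e^r(G)$, but $u$'s unique $\M_G$-edge is the interior edge $(uv)$. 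Hence $(uv) \in E(\bB_e^r(G)\setminus \mathsf{A})$, i.e.\ $(uv) \in M$, and was already removed when forming $\M_G \setminus M$. This rules out the conflict at $v$, confirms $M''$ is a matching, and completes the swap.
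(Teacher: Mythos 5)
Your proposal is correct and follows essentially the same cut-and-paste argument as the paper: both compare $\M_G$ restricted to the ball with the optimal matching of $\bB_e^r(G)\setminus\mathsf{A}$ by gluing the latter to $\M_G\setminus E(\bB_e^r(G))$ and invoking optimality plus genericity. Your version spells out the vertex-conflict check that the paper leaves implicit, but the underlying idea is identical.
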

\begin{proof}
Write $B=\bB_e^r(G)$. Then the union
$\M_{B\setminus \mathsf{A}} \cup (\M_G\setminus E(B))$
is a matching of $G$. Since $\M_G = (\M_G \cap E(B) ) \cup  (\M_G \setminus E(B))$ is the MWM of $G$,
\[
w_{\M_G\cap E(B)} \ \ge\ w_{\M_{B\setminus \mathsf{A}}}.
\]
Conversely, $\M_G\cap E(B)$ is a matching on $B\setminus \mathsf{A}$ on which $\M_{B\setminus \mathsf{A}}$ is the optimal matching, so we have the reverse inequality as well.  Hence the weights are equal, and since the weights are generic, the matchings coincide: $
\M_G\cap E(B)=\M_{B\setminus \mathsf{A}}.$ This yields \eqref{eq: forbidden_vertices}.
\end{proof}

\subsection{Augmenting paths}
The MWM on a finite graph can be characterized using augmenting paths, defined as follows.
\begin{df}
\label{df:augmenting_path}
Let $G$ be a finite graph with fixed (non-random) edge weights $(w_e)_{e \in E(G)}$, and let $M$ be a matching.  
An augmenting path for $M$ is a finite self-avoiding path or cycle  
\[
P = e_1 \sim e_2 \sim \cdots \sim e_n
\]
where $e \sim e'$ means that the edges are adjacent in $G$ (and, if $P$ is a cycle, we also have $e_n \sim e_1$), such that the following hold:
\begin{enumerate}
    \item The edges of $P$ alternate between those in $M$ and those not in $M$.  In particular, if $P$ is a cycle, then $n$ must be even.
    \item Writing $e_i = (x_{i-1}, x_i)$ for $i = 1, \ldots, n$, if $P$ is not a cycle and if $e_1$ (respectively $e_n$) is not in $M$, then $x_0$ (respectively $x_n$) is unmatched in $M$.
    \item The edge weights on $P$ satisfy
    \[
        \sum_{e \in P \cap M} w_e \;<\; \sum_{e \in P \cap M^c} w_e.
    \]
\end{enumerate}
\end{df}
It turns out that a matching has maximum weight if and only if there is no augmenting path for the matching.
\begin{lem}
	\label{lem: augmenting}
	Let $G$ be a finite weighted graph and let $M$ be a matching of $G$. Then $M$ is a MWM if and only if there is no augmenting path for $M$.
\end{lem}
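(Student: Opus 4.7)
The plan is to prove both directions by the symmetric-difference trick, which is a standard device in matching theory.

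For the forward direction, I would argue by contrapositive. Suppose $P = e_1 \sim \cdots \sim e_n$ is an augmenting path for $M$, and set $M' := M \triangle E(P)$. The first task is to verify that $M'$ is itself a matching. At any interior vertex of $P$ the two incident $P$-edges are one in $M$ and one outside $M$, so toggling them preserves the property that at most one incident edge lies in $M'$, and self-avoidance of $P$ rules out other conflicts along $P$. The only remaining concern is at the endpoints of a path (cycles pose no such issue): at an endpoint, if the incident $P$-edge is already in $M$ then removing it creates no conflict, while if it is outside $M$ then condition (2) of Definition~\ref{df:augmenting_path} guarantees that the endpoint vertex is unmatched in $M$, so adding the new edge does not create a conflict either. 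Once $M'$ is known to be a matching, condition (3) of the same definition yields $w_{M'} - w_M > 0$, and hence $M$ cannot be the MWM.

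For the reverse direction, again by contrapositive, assume $M$ is not the MWM and let $M^* = \M_G$, so $w_{M^*} > w_M$. Consider the subgraph $H := M \triangle M^*$ of $G$. Each vertex has degree at most $1$ in each of $M$ and $M^*$, hence degree at most $2$ in $H$; so $H$ decomposes into vertex-disjoint simple paths and cycles. At every interior vertex of $H$ the two incident $H$-edges must come one from $M \setminus M^*$ and one from $M^* \setminus M$, since otherwise two edges of $M$ (or two edges of $M^*$) would share a vertex. Hence each component of $H$ alternates between $M$ and $M^*$, and in particular every cycle has even length. Since $w_{M^*} - w_M = \sum_{e \in M^* \setminus M} w_e - \sum_{e \in M \setminus M^*} w_e > 0$, at least one component $P$ of $H$ satisfies
\[
\sum_{e \in P \cap M^*} w_e \;>\; \sum_{e \in P \cap M} w_e.
\]

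I then claim $P$ is an augmenting path for $M$. Conditions (1) and (3) of Definition~\ref{df:augmenting_path} follow from the alternation and the displayed inequality, using that along $P$ the non-$M$ edges coincide with the $M^*$-edges. The only step where I expect to be slightly careful is condition (2). If $P$ is a path with endpoint $x_0$ and $e_1 \notin M$, then $e_1 \in M^*$. Were $x_0$ matched in $M$ by some edge $f$, then $f \neq e_1$; if $f \in M^*$ we would have two distinct $M^*$-edges meeting at $x_0$, impossible, while if $f \notin M^*$ then $f \in H$ is incident to $x_0$, forcing $x_0$ to have degree at least $2$ in $H$ and contradicting that $x_0$ is an endpoint of $P$. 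The argument at $x_n$ is symmetric. The main (and only mildly nontrivial) obstacle in this whole proof is precisely this endpoint bookkeeping; everything else is a routine application of the symmetric-difference decomposition, and the genericity of the weights is not used in this lemma.
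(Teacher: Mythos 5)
Your proof is correct and follows essentially the same route as the paper's: toggle the edges along $P$ for the forward direction, and decompose $M\triangle M'$ into alternating paths and cycles, extract a component with positive gain, and verify the endpoint condition for the converse. The only differences are cosmetic (you verify explicitly that $M\triangle E(P)$ is a matching, which the paper asserts, and you take $M'=\M_G$ rather than an arbitrary heavier matching), and your observation that genericity of the weights is not needed here is accurate.
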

\begin{proof}
Suppose that there is an augmenting path $P$ for $M$.  
Then 
\[
(M \setminus (M \cap P)) \cup (M^c \cap P)
\]
is a matching whose weight is strictly larger than that of $M$, so $M$ cannot be a MWM.

Conversely, suppose that $M$ is not a MWM.  
Then there exists a matching $M'$ of $G$ such that $w_{M'} > w_M$.  
Note that $M \triangle M'$ is a disjoint union of paths and cycles.  
Clearly, at least one such path or cycle $P$ must satisfy
\[
\sum_{e \in P \cap M} w_e \;<\; \sum_{e \in P \cap M^c} w_e.
\]
We claim that such a $P$ is an augmenting path for $M$.  
It is immediate that the edges of $P$ alternate between those in $M$ and those not in $M$.

\smallskip
\noindent\textbf{Case 1:} $P$ is a cycle.  
If $n$ were odd, then both $e_1$ and $e_n$ would belong to the same matching (either $M$ or $M'$), which is impossible.  
Hence $n$ must be even.

\smallskip
\noindent\textbf{Case 2:} $P$ is not a cycle.  
Without loss of generality, we only need to check that if $e_1 = (x_0, x_1) \in M'$, then $x_0$ is unmatched in $M$.  
Suppose, to the contrary, that $x_0$ is matched in $M$.  
Then there exists an edge $e_0 = (x_{-1}, x_0) \in M$.  
Note that $e_0$ must also be in $M'$, since $e_0 \notin M \triangle M'$.  
This is impossible, because $e_0 \sim e_1$ and both $e_0, e_1 \in M'$.

\smallskip
Therefore, $P$ satisfies all the properties of an augmenting path.  
In other words, if $M$ is not a MWM, then there exists an augmenting path for $M$.
\end{proof}

\subsection{Cavity recursion of MWM}
In the study of MWM, a standard and widely used tool is the concept of the \emph{bonus} (or \emph{cavity}).  
This notion, introduced in \cite{AldousSteele} to study the MWM on random trees, is defined as follows.
\begin{df}
	For any finite graph $G$ and $v\in V(G)$, we define the bonus at the vertex $v$ in $G$ by $\B(v,G) = \W_G - \W_{G\setminus\{v\}} \in [0, \infty)$.
\end{df}
In words, the bonus of a vertex represents the loss in the weight of the optimal matching when no edge incident to $v$
 is allowed to be used. Thus, the bonus is always nonnegative. Also, a vertex $v\in V$ is covered by one of the edges of  $\M_G$ if and only if $\B(v,G)>0$. 
 
 It is easy to check that the weight of the MWM satisfies the following recursion:
for any finite subgraph $H$ of a graph $G$ and for any vertex $v \in V(H)$, 
\begin{equation*} \label{eq:key_recursion}
 \W_H = \max\big ( \W_{H\setminus\{v\}}, \max_{u: (uv) \in E(H) } \big(w_{(uv)} + \W_{H \setminus \{u, v\}}\big)\big).
\end{equation*}
Subtracting $\W_{H\setminus\{v\}}$ from both sides of the above identity yields the recursion: 
\begin{equation}
\label{eq: bonus_recursion}
\B(v, H) = \max_{u: (uv) \in E(H)} \big(w_{(uv)} - \B(u, H\setminus\{v\})\big)_+.
\end{equation}
where  $(\;\cdot\;)_+ := \max(\cdot, 0)$. Although the relation \eqref{eq: bonus_recursion} provides a recursive distributional equation for bonuses on any finite graph, its rigorous analysis is challenging because the bonuses on the right-hand side of \eqref{eq: bonus_recursion} are not independent in general. 
This issue disappears when the underlying graph is a tree.  
Let $T$ be a rooted tree. For $v \in V(T)$, denote by $T_v$ the subtree rooted at $v$, and let $v_1, v_2, \ldots, v_{d_v}$ be the children of $v$ in $T$.  
Then \eqref{eq: bonus_recursion} can be rewritten as  
\begin{equation}
\label{eq: bonus_recursion_tree}
\B(v) = \max_{1 \le i \le d_v} \big(w_{(v v_i)} - \B(v_i)\big)_+,
\end{equation}
where $\B(v)$ denotes $\B(v, T_v)$.  
The advantage of the recursion \eqref{eq: bonus_recursion_tree} is that the bonuses and the weights appearing on the right-hand side are all independent.

\subsection{Characterizing edges in  MWM}
\label{sec: characterization}

We present a characterization of $\{e \in \M_G\}$ for a finite graph $G$, inspired by the message-passing algorithm of \cite{enriquez2025optimal}. While their focus is on finite trees, a similar, though slightly less symmetric, characterization can be derived for general graphs.

Write $e = (uv)$. If $e \in \M_G$, then 
\[
\W_G = w_{(uv)} + \W_{G\setminus \{u, v\}}.
\]
On the other hand, if $e\not\in \M_G$, then
\[
\W_G = \W_{G\setminus \{(uv)\}}.
\]
Thus, $e\in \M_G$ if and only if
\[
w_{(uv)} + \W_{G\setminus \{u, v\}} - \W_{G\setminus \{(uv)\}} > 0,
\]
or, equivalently,
\[
w_{(uv)} > \W_{G\setminus \{(uv)\}} - \W_{G\setminus \{u, v\}}.
\]
Note that the right side of the above inequality is nonnegative. Furthermore,
\begin{align*}
    \W_{G\setminus \{(uv)\}} - \W_{G\setminus \{u, v\}} &= \W_{G\setminus \{(uv)\}} - \W_{G\setminus\{u\}} + \W_{G\setminus \{u\}} - \W_{G\setminus \{u, v\}}\\
    &= \B(u, G \setminus \{(uv)\}) + \B(v, G\setminus\{u\}),
\end{align*}
where in the last equality we used the fact that $G\setminus\{u\} = (G\setminus \{(uv)\}) \setminus \{u\}$. Thus, 
\begin{equation}
\label{eq: equivalence}
    e\in \M_G \iff w_{(uv)} > \B(u, G \setminus \{(uv)\}) + \B(v, G\setminus\{u\}).
\end{equation}
We remark that, on a general finite graph, the two bonus terms on the right-hand side may be dependent, but both are independent of $w_{(uv)}$.

\begin{figure}[ht]
\centering
\begin{subfigure}{.5\textwidth}
  \centering
  \includegraphics[width=0.8\linewidth]{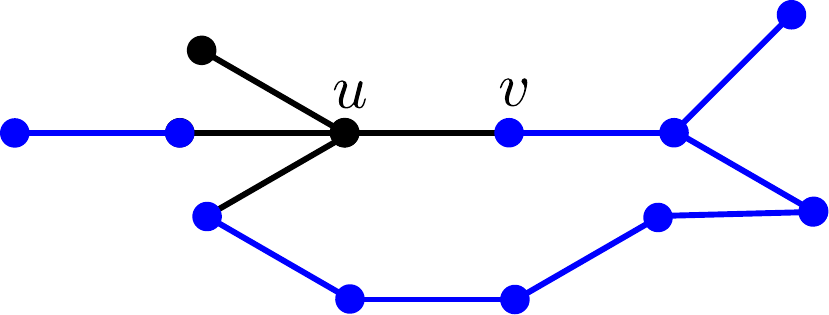}
  \caption{General graph $G$}
\end{subfigure}%
\begin{subfigure}{.5\textwidth}
  \centering
  \includegraphics[width=0.8\linewidth]{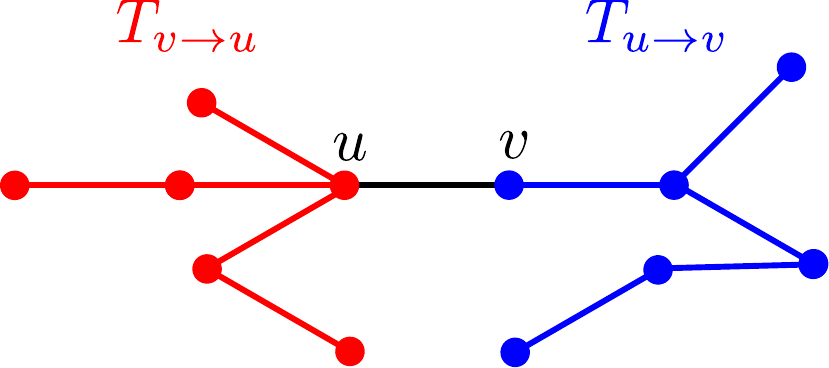}
  \caption{The case when $G$ is a tree}
\end{subfigure}
\caption{In (a), the blue subgraph is $G\setminus \{u\}$. In this case, its connected component containing $v$ overlaps with $G\setminus \{(uv)\}$, so the bonuses $\B(u, G \setminus \{(uv)\})$ and $\B(v, G\setminus\{u\})$ are not independent of each other, but they are independent of the edge weight $w_{(uv)}$. In (b), the graph $G$ is a tree, and in this case $\B(u, G \setminus \{(uv)\})$ is exactly $\B(u, T_{v\to u})$; similarly, $\B(v, G\setminus\{u\}) = \B(v, T_{u\to v})$.}
\end{figure}

When the graph $G$ is a tree, we can express \eqref{eq: equivalence} in a more symmetric form. Let $T_{v\to u}$ and $T_{u\to v}$ denote the subtrees rooted at $u$ and $v$, respectively, after removing the edge $(uv)$ from $G$. Then \eqref{eq: equivalence} becomes
\begin{equation}
    \label{eq: equivalence_tree}
    e\in \M_G \iff w_{(uv)} > \B(u, T_{v\to u}) + \B(v, T_{u\to v}).
\end{equation}
Moreover, since the edges of $T_{v\to u}, T_{u\to v}$ are disjoint from each other and also distinct from $(uv)$. Therefore, $ w_{(uv)}, \B(u, T_{v\to u}), \B(v, T_{u\to v}) $ are all independent.

\subsection{Local bounds for bonuses}  
\label{sec: local_bounds}
Fix a finite subgraph $H$ of $G$ with $|V(H)| \ge 1$. For any vertex $u \in V(H)$, let $d_u(H)$ denote the degree of $u$ in $H$, and  list its neighbors as $u_1, u_2,\ldots, u_{d_u(H)}.$

Given $r \ge 0$ and $u \in V(H)$, we now inductively define two local quantities, $\B^0_r(u, H)$ and $\B^\infty_r(u, H)$, which depend only on the edge weights within the ball of radius $r$ around $u$ in $H$, and serve as lower and upper bounds for $\B(u, H)$ depending on the parity of $r$. First assume that $d_u(H) \ge 1.$ Let
\begin{align*}
    \B_0^0(u, H) &= 0, \\
    \B_r^0(u, H) &= \max_{ 1 \le i \le d_u(H)}  (w_{(u u_i)}- \B_{r-1}^0(u_i, H \setminus \{u\}  )_+, \ \text{ for } r \ge 1.
\end{align*}
Similarly, let
\begin{align*}
    \B_0^\infty(u, H) &= \infty, \\
    \B_r^\infty(u, H) &= \max_{ 1 \le i \le d_u(H)}  (w_{(u u_i)}- \B_{r-1}^\infty(u_i, H \setminus \{u\}  )_+, \ \text{ for } r \ge 1.
\end{align*}
We set $\B_r^0(u, H)$ and $\B_r^\infty(u, H)$ to be $0$ if $d_u(H) = 0$.
Owing to the anti-monotonicity of the bonus recursion \eqref{eq: bonus_recursion}, we can easily show by induction that
\begin{align*}
    \B_{r}^0(u, H) \leq \B(u, H) \leq \B_{r}^\infty(u, H) & \quad \text{if $r$ is even,}\\
    \B_{r}^0(u, H) \geq \B(u, H) \geq \B_{r}^\infty(u, H) & \quad \text{if $r$ is odd.}
\end{align*}
  Furthermore, another simple induction shows that 
  for any $u \in V(H),$ $r \ge 0$,  and  $A\subseteq \partial \bB_u^r(H)$,
  \begin{align*}
      \B_{r}^0(u, H) &\le \B_{r}^0(u, H \setminus A) \ \ \text{ if $r$ is even,} \\
      \B_{r}^0(u, H) &\ge \B_{r}^0(u, H \setminus A) \ \ \text{ if $r$ is odd,}
  \end{align*}  
  and the inequality reverses for $\B_r^\infty.$ Combining, we obtain  
\begin{align}
    \label{eq: antimonotone_stronger}
    \begin{split}
    \B_{r}^0(u, H) \le \B_{r}^0(u, H \setminus A) \leq \B(u, H \setminus A) \le  \B_{r}^\infty(u, H \setminus A)  \leq \B_{r}^\infty(u, H) & \quad \text{if $r$ is even,}\\
    \B_{r}^0(u, H) \ge   \B_{r}^0(u, H \setminus A) \geq \B(u, H \setminus A) \geq \B_{r}^\infty(u, H \setminus A) \ge  \B_{r}^\infty(u, H) & \quad \text{if $r$ is odd.}
    \end{split}
\end{align}

\section{Correlation decay}\label{sec:cor_decay}
\subsection{Correlation decay on a tree}
Let $T$ be a rooted tree with root $o^\dagger$, where the degree of $o^\dagger$ is one, and let $o$ be the unique neighbor of $o^\dagger$.  
For any vertex $v\neq o^\dagger$, let $v^\dagger$ denote its parent, i.e., the unique neighbor of $v$ that is closer to the root $o^\dagger$.  
For a vertex $v$, denote its set of children by $\cC_v=\{v_1,\dots,v_{d_v}\}$, which may be empty.

For any $v\neq o^\dagger$, let $T_v$ be the subtree of $T$ containing $v$ obtained by deleting its parent $v^\dagger$ from $T$.  
We adopt the convention that $T_{o^\dagger}=T$. To each vertex $v\neq o^\dagger$ of $T$, associate the edge $e_v=(v\,v^\dagger)$, and refer to $e_{v_1},\dots,e_{v_{d_v}}$ as the \emph{child edges} of $e_v$.

Let $\cL_T$ be the set of leaves of $T$, i.e., the non-root vertices of degree one.  
Define the height of $T$ as
\[
r:=\max_{v\in\cL_T}\dist(v,o^\dagger),
\]
where $\dist$ denote the graph distance in $T$, and let
\[
\cL_T^{\,r}:=\{v\in\cL_T:\dist(v,o^\dagger)=r\}
\]
be the subset of leaves at distance exactly $r$ from the root. We assume that $r \ge 4.$

We define the recursion of the bonuses on $T$ with a boundary condition 
$(a_v)_{v \in \mathcal{L}^r_T} \in [0, \infty)^{|\mathcal{L}^r_T|}$
as follows:
 \begin{align}\label{eq:tree_rec}
 \B^a(v) &= \max_{1 \le i \le d_v} \big(w_{e_{v_i}} - \B^a(v_i)\big)_+, \ \ \ \text{for } v \not \in \mathcal{L}_T,  \\
 \B^a(v) &= a_v \in [0, \infty) \ \ \ \text{for } v \in \mathcal{L}_T^r, \ \ \text{ and } \ \  \B^a(v) = 0  \ \ \ \text{for } v \in \mathcal{L}_T \setminus \mathcal{L}_T^r. \nonumber
 \end{align}
It is straightforward to check that, for any given boundary condition, the recursion \eqref{eq:tree_rec} admits a unique solution $(\B^a(v))_{v \in V(T)}$. In the case of the zero boundary condition, i.e., when $a \equiv 0$, the solution $(\B^0(v))_{v \in V(T)}$ coincides with the true bonuses, namely,
\[
\B^0(v) = \B(v) := \B\big(v, T_v\big), \quad \forall\, v \in V(T),
\]
since, for each non-leaf vertex, the recursion \eqref{eq:tree_rec} is exactly the same as \eqref{eq: bonus_recursion_tree}. 

 Let us make a simple yet important observation, which is a consequence of the recursion \eqref{eq:tree_rec} being anti-monotone. For any $ v \in V(T)$, 
\begin{equation}\label{anti_monotone}
(-1)^{r - \mathrm{dist}(o^\dagger, v)} \B^a(v)  \text{ is an increasing function of } (a_u)_{u \in \mathcal{L}^r_T}.
\end{equation}

For every edge $e_v = (v^\dagger v)$ of $T$, we define the indicator variable
\[ \xi^a (e_v) = \mathbf{1}_{ \{ w_{e_v} > \B^a(v) \} }.  \]
Given any non-random boundary condition, the bonuses $\B^a(v_1), \B^a(v_2), \ldots, \B^a(v_{d_v}) $ of the children $v_1, v_2, \ldots, v_{d_v}$ of $v$
depend only on the edge weights in the subtrees  $T_{v_1}, T_{v_2}, \ldots, T_{v_{d_v}}$.  Consequently, the variables 
\begin{equation}\label{fact:bonus_indep_edge1}
 \big( w_{e_{v_1}},\B^a(v_1)\big), \big( w_{e_{v_2}},\B^a(v_2)\big), \ldots, \big( w_{e_{v_{d_v}}},\B^a(v_{d_v})\big),  \ w_{e_v}   
\end{equation}
are independent. In particular, 
\begin{equation}\label{fact:bonus_indep_edge2}
\xi^a(e_{v_1}), \ \xi^a(e_{v_2}), \ \ldots, \ \xi^a(e_{v_{d_v}}), \ w_{e_v}
\end{equation}
are independent.

From the bonus recursion \eqref{eq:tree_rec}, the memoryless property of the exponential distribution, and \eqref{fact:bonus_indep_edge1}, it follows that for any $v \notin \mathcal{L}_T \cup \{ o^\dagger \} $,
\[
\B^a(v) \stackrel{d}{=} \max_{1 \le i \le d_v} \xi^a(e_{v_i}) \, w'_{e_{v_i}},
\]
where $w'_{e_{v_i}}$ are i.i.d.\ copies of $w_{e_{v_i}}$, independent of $\xi^a(e_{v_1}), \ldots, \xi^a(e_{v_{d_v}}), w_{e_v}$.  
By \eqref{fact:bonus_indep_edge2}, we then compute
\begin{align*}
\mathbf{P}\big( \xi^a(e_v) = 1 \big) 
&= \mathbf{P} \big( w_{e_v} > \B^a(v) \big)  \\
&= \mathbf{P} \Big( w_{e_v} > \max_{1 \le i \le d_v} \xi^a(e_{v_i}) \, w'_{e_{v_i}} \Big) \\
&= \mathbf{E} \bigg[ \frac{1}{ 1 + \xi^a(e_{v_1}) + \xi^a(e_{v_2}) + \cdots + \xi^a(e_{v_{d_v}}) } \bigg].
\end{align*}

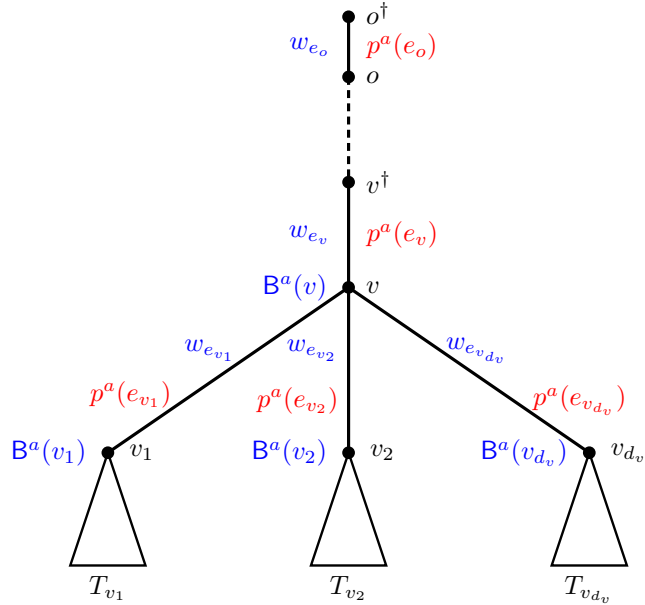
\begin{figure}[ht]
\centering
\begin{tikzpicture}[scale=1.0, every node/.style={font=\small}]
  \tikzstyle{vtx}=[circle, fill=black, inner sep=1.7pt]
  \def\edgewidth{1.2pt}

  \coordinate (root) at (0,3.6);
  \coordinate (o)     at (0,2.8);      
  \coordinate (vdag)  at (0,1.4);      
  \coordinate (v)     at (0,0.0);      

  \coordinate (v1)    at (-3.2,-2.2);
  \coordinate (v2)    at ( 0.0,-2.2);
  \coordinate (vd)    at ( 3.2,-2.2);

  \draw[line width=\edgewidth] (root) -- (o);
  \draw[densely dashed, line width=\edgewidth] (o) -- (vdag);
  \draw[line width=\edgewidth] (vdag) -- (v);

  \draw[line width=\edgewidth] (v) -- (v1);
  \draw[line width=\edgewidth] (v) -- (v2);
  \draw[line width=\edgewidth] (v) -- (vd);

  \def\triW{1.0}   
  \def\triH{1.5}   

  \coordinate (v1L) at ($(v1)+(-\triW/2,-\triH)$);
  \coordinate (v1R) at ($(v1)+(\triW/2,-\triH)$);
  \draw[line width=0.9pt] (v1) -- (v1L) -- (v1R) -- cycle;
  \node[anchor=north] at ($(v1L)!0.5!(v1R)$) {$T_{v_1}$};

  \coordinate (v2L) at ($(v2)+(-\triW/2,-\triH)$);
  \coordinate (v2R) at ($(v2)+(\triW/2,-\triH)$);
  \draw[line width=0.9pt] (v2) -- (v2L) -- (v2R) -- cycle;
  \node[anchor=north] at ($(v2L)!0.5!(v2R)$) {$T_{v_2}$};

  \coordinate (vdL) at ($(vd)+(-\triW/2,-\triH)$);
  \coordinate (vdR) at ($(vd)+(\triW/2,-\triH)$);
  \draw[line width=0.9pt] (vd) -- (vdL) -- (vdR) -- cycle;
  \node[anchor=north] at ($(vdL)!0.5!(vdR)$) {$T_{v_{d_v}}$};

  \node[vtx] at (root) {};
  \node[vtx] at (o) {};
  \node[vtx] at (vdag) {};
  \node[vtx] at (v) {};
  \node[vtx] at (v1) {};
  \node[vtx] at (v2) {};
  \node[vtx] at (vd) {};

  \node[anchor=west]  at ($(root)+(0.1,0.05)$) {$o^\dagger$};
  \node[anchor=west]  at ($(o)+(0.1,0)$) {$o$};
  \node[anchor=west]  at ($(vdag)+(0.1,0)$) {$v^\dagger$};
  \node[anchor=west]  at ($(v)+(0.1,0)$) {$v$};

  \node[anchor=west] at ($(v1)+(0.15,0)$) {$v_1$};
  \node[anchor=west] at ($(v2)+(0.15,0)$) {$v_2$};
  \node[anchor=west] at ($(vd)+(0.15,0)$) {$v_{d_v}$};

  \node[anchor=east] at ($(v)+(-0.15,0)$) {\textcolor{blue}{$\B^a(v)$}};
  \node[anchor=east] at ($(v1)+(-0.15,0)$) {\textcolor{blue}{$\B^a(v_1)$}};
  \node[anchor=east] at ($(v2)+(-0.15,0)$) {\textcolor{blue}{$\B^a(v_2)$}};
  \node[anchor=east] at ($(vd)+(-0.15,0)$) {\textcolor{blue}{$\B^a(v_{d_v})$}};

  \path (root) -- (o) node[midway, left=3pt]  {\textcolor{blue}{$w_{e_o}$}};
  \path (root) -- (o) node[midway, right=3pt] {\textcolor{red}{$p^a(e_o)$}};

  \path (vdag) -- (v) node[midway, left=3pt]  {\textcolor{blue}{$w_{e_v}$}};
  \path (vdag) -- (v) node[midway, right=3pt] {\textcolor{red}{$p^a(e_v)$}};

  \node[anchor=east] at ($(v)!0.30!(v1)+(-0.4,-0.18)$) {\textcolor{blue}{$w_{e_{v_1}}$}};
  \node[anchor=east] at ($(v)!0.60!(v1)+(-0.3,-0.1)$) {\textcolor{red}{$p^a(e_{v_1})$}};

  \node[anchor=east] at ($(v)!0.30!(v2)+(0,-0.18)$) {\textcolor{blue}{$w_{e_{v_2}}$}};
  \node[anchor=east] at ($(v)!0.60!(v2)+(0,-0.18)$) {\textcolor{red}{$p^a(e_{v_2})$}};

  \node[anchor=west] at ($(v)!0.30!(vd)+(0.2,-0.18)$) {\textcolor{blue}{$w_{e_{v_{d_v}}}$}};
  \node[anchor=west] at ($(v)!0.60!(vd)+(0.4,-0.18)$) {\textcolor{red}{$p^a(e_{v_{d_v}})$}};

\end{tikzpicture}
\caption{Illustration of the recursion  \eqref{eq:rec_p} on the edges of $T$}
\end{figure}

 This yields a recurrence relation for the means of the indicator variables.  
Letting
\[
p^a(e_v) := \mathbf{E}\big[ \xi^a(e_v) \big],
\]
we have for $v \notin \mathcal{L}_T \cup \{ o^\dagger\}$
\begin{equation}\label{eq:rec_p}
p^a(e_v)  
=  \Phi\big(p^a(e_{v_1}), \ldots, p^a(e_{v_{d_v}}) \big) 
:=  \mathbf{E} \bigg[ \frac{1}{ 1 + \xi^a(e_{v_1}) + \xi^a(e_{v_2}) + \cdots + \xi^a(e_{v_{d_v}}) } \bigg],
\end{equation}
where $\xi^a(e_{v_i}), 1 \le i \le d_v$, are independent Bernoulli random variables with means $p^a(e_{v_i})$.  For a leaf edge $e_v = (v^\dagger v)$ with $v \in \mathcal{L}_T$, we have
\begin{equation}\label{eq:rec_p_bd}
p^a(e_v) = \mathbf{P}( w_{e_v} > a_v ) = e^{-a_v}, 
\quad v \in \mathcal{L}^r_T,
\qquad\text{and}\qquad
p^a(e_v) = \mathbf{P}( w_{e_v} > 0 ) = 1, 
\quad v \in \mathcal{L}_T \setminus \mathcal{L}^r_T,
\end{equation}
which serve as the boundary conditions.

The recurrence relation \eqref{eq:rec_p}, together with the boundary conditions \eqref{eq:rec_p_bd}, can be solved iteratively by moving up the tree from the leaf edges to the root edge.  
The value at the root edge $e_o = (o^\dagger o)$, denoted $p^a(e_o)$, is thus a function of $(a_v)_{v \in \mathcal{L}^r_T}$. The following lemma is crucial, as it states that $p^a(e_o)$ becomes insensitive to the boundary condition when the height of the tree is large.

\begin{lem}\label{lem:contracttion}
Let $T$ be a tree of depth $r \ge 4$ as described in this section and assume that the maximum degree of $T$ is bounded above by $D$. Let $a' = (a_v)_{v \in \mathcal{L}_T^r}, a'' = (a''_v)_{v \in \mathcal{L}_T^r}$ be two arbitrary boundary conditions on $T$. Then 
\[ \big| p^{a'}(e_o) -  p^{a''}(e_o) \big|  \le  D\big( 1  - (2D)^{-D} \big)^{r-3}. \]
Moreover, the same bound continues to hold if we take $a'' \equiv +\infty.$
\end{lem}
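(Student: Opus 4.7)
The strategy is to work in the logarithmic variable $q = -\log p$ and prove that $\Psi$, iterated along the tree, is contractive in the sup-norm uniformly over boundary conditions. The first step is to compute the Jacobian. With $p_i = e^{-q_i}$ and $\bar p = \Phi(p_1,\ldots,p_d) = \E[1/(1+S)]$, where $S = \sum_i \xi_i$ and $\xi_i$ are independent $\mathrm{Ber}(p_i)$, the chain rule gives $\partial_{q_i}\Psi = -p_i A_i/\bar p$ with $A_i := \E[((1+S_{-i})(2+S_{-i}))^{-1}] \ge 0$. Using the telescoping identity $\sum_i \xi_i/[(1+S_{-i})(2+S_{-i})] = \sum_i \xi_i/[S(S+1)] = \mathbf{1}_{\{S\ge 1\}}/(S+1)$ (with the convention $0/0 := 0$), the expectation together with independence yields $\sum_i p_i A_i = \bar p - \prod_i(1-p_i)$, hence
\[
\sum_{i=1}^d \Bigl|\frac{\partial \Psi}{\partial q_i}\Bigr| \;=\; 1 - \frac{\prod_{i=1}^d (1-p_i)}{\bar p} \;\le\; 1.
\]
In particular $\Psi$ is non-expansive in the sup-norm, and from $\bar p \ge 1/(1+d_v)$ we have $q^a(e_v) \in [0,\log(1+D)]$ at every non-leaf vertex $v$.

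Next I would establish a uniform contraction rate at vertices two or more levels above the boundary. The key numerical observation is that a non-root non-leaf vertex has at most $D - 1$ children, and hence its $p$-value is at least $1/D$ (not merely $1/(1+D)$). I claim every non-leaf vertex $w$ at depth $r - 2$ satisfies $p^a(e_w) \le 1 - 1/(2D)$. Indeed, if $w$ has some non-depth-$r$ leaf child then $p_j = 1$ forces $\xi_j \equiv 1$ and hence $\bar p \le 1/2 \le 1 - 1/(2D)$; otherwise every child is a non-leaf at depth $r-1$ with $p_i \in [1/D, 1]$, in which case
\[
1 - \bar p \;=\; \E\Bigl[\tfrac{S}{1+S}\Bigr] \;\ge\; \tfrac12 \mathbf{P}(S \ge 1) \;\ge\; \tfrac12 \bigl(1 - ((D-1)/D)^{d_w}\bigr) \;\ge\; 1/(2D).
\]
Now take any non-leaf vertex $v$ at depth $\le r - 3$. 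Its non-leaf children sit at depth $\le r-2$ and therefore satisfy $p_i \le 1 - 1/(2D)$; since $v$ has at most $D - 1$ children, $\prod_i (1-p_i) \ge (1/(2D))^{D-1} \ge (2D)^{-D}$, and using $\bar p \le 1$ we conclude $\sum_i|\partial_{q_i}\Psi| \le 1 - (2D)^{-D}$. When $v$ has a non-depth-$r$ leaf child, the sensitivity for the \emph{varying} children reduces to $1 - \sum_{\text{fixed}} A_j/\bar p \le 1 - 2/(D(D+1))$ (using $A_j \ge 1/((D-1)D)$ and $\bar p \le 1/2$), which is also at most $1 - (2D)^{-D}$.

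With uniform contraction in hand, I iterate. Set $M_k := \sup_{v\text{ non-leaf at depth }k} |q^{a'}(e_v) - q^{a''}(e_v)|$. The first paragraph gives $M_{r-1} \le \log(1+D)$, non-expansiveness gives $M_{r-2} \le M_{r-1}$, and the contractive bound gives $M_k \le (1 - (2D)^{-D})\,M_{k+1}$ for $1 \le k \le r-3$. Iterating down to the root edge at depth~$1$ yields $M_1 \le (1 - (2D)^{-D})^{r-3}\log(1+D)$. Converting back via the $1$-Lipschitz map $q \mapsto e^{-q}$ on $[0,\infty)$ gives
\[
|p^{a'}(e_o) - p^{a''}(e_o)| \;\le\; |q^{a'}(e_o) - q^{a''}(e_o)| \;\le\; D\bigl(1 - (2D)^{-D}\bigr)^{r-3}.
\]
For the case $a'' \equiv +\infty$ I take pointwise limits $a''_v \to +\infty$ leaf by leaf; by the anti-monotonicity \eqref{anti_monotone} each bonus has a well-defined monotone limit, and the inequality passes through.

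The main technical hurdle is the case analysis underlying the contraction estimate. The naive hope that $\sum_i|\partial_{q_i}\Psi|$ is uniformly bounded away from $1$ breaks whenever any $p_i = 1$, which happens at every non-depth-$r$ leaf and makes $\prod_i(1-p_i) = 0$. One has to either route the contraction through the term $A_j/\bar p$ for the fixed leaf coordinate, or rule out leaf children by descending at least to depth $r - 3$ and using the sharp child-count bound $d_v \le D - 1$ for non-root non-leaf vertices. Matching the precise constant $(2D)^{-D}$ rather than the weaker $(2(D+1))^{-D}$ that falls out of a less careful accounting relies on this sharpened count.
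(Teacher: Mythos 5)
Your proposal is correct and follows essentially the same route as the paper: the $q=-\log p$ reparametrization, an $\ell_1$ gradient bound of the form $1-\prod_i(1-p_i)$, the a priori estimates $p\ge 1/D$ one level above the boundary and $p\le 1-1/(2D)$ two levels above, level-by-level contraction with factor $1-(2D)^{-D}$, and a monotone limit for $a''\equiv+\infty$. The only cosmetic difference is that your exact telescoping identity $\sum_i p_iA_i=\bar p-\prod_i(1-p_i)$ replaces the paper's Lemma~\ref{l:key_ineq} combined with a correlation inequality (and its $W$-term absorbs the fixed leaf-children that you treat by a separate case), but this does not change the argument in any essential way.
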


We begin with a lemma that can be shown by an elementary computation. 
\begin{lem}\label{l:derivative}
Let $Y \ge 0$ be a non-negative random variable. Let  $\xi \sim \Ber(p)$, independent of $Y$. Then 
\[ \frac{\mathrm{d}}{\mathrm{d}p} \E \Big [ \frac{1}{1+ Y +\xi} \Big]  = -  \E \Big [ \frac{1}{(1+Y)(2+Y)} \Big ].   \]
\end{lem}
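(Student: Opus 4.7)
The plan is to condition on $Y$ and evaluate the inner expectation over $\xi$ explicitly, then differentiate in $p$ before pushing the expectation back out over $Y$.

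First I would use independence of $\xi$ and $Y$ together with the tower property to write
\[
\E\!\left[\frac{1}{1+Y+\xi}\right] \;=\; \E\!\left[\,\E\!\left[\frac{1}{1+Y+\xi}\,\Big|\,Y\right]\right].
\]
Since $\xi \sim \mathrm{Ber}(p)$ takes only the values $0$ and $1$, the inner conditional expectation equals
\[
(1-p)\cdot\frac{1}{1+Y} + p\cdot\frac{1}{2+Y}.
\]
Substituting back gives the closed form
\[
\E\!\left[\frac{1}{1+Y+\xi}\right] \;=\; \E\!\left[\frac{1}{1+Y}\right] + p\left(\E\!\left[\frac{1}{2+Y}\right] - \E\!\left[\frac{1}{1+Y}\right]\right),
\]
which is linear in $p$.

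Differentiating in $p$ and combining the two expectations as one expectation of a difference,
\[
\frac{d}{dp}\E\!\left[\frac{1}{1+Y+\xi}\right] \;=\; \E\!\left[\frac{1}{2+Y}-\frac{1}{1+Y}\right] \;=\; \E\!\left[\frac{(1+Y)-(2+Y)}{(1+Y)(2+Y)}\right] \;=\; -\E\!\left[\frac{1}{(1+Y)(2+Y)}\right],
\]
which is exactly the claimed identity. The differentiation under the expectation is automatically justified here because, after conditioning on $Y$, the integrand is an affine (hence smooth) function of $p$, so no dominated convergence argument is needed; this is the only point where one might worry, but the explicit linearity in $p$ makes it trivial. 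There is no real obstacle in this lemma — it is a one-line calculation once one conditions on $Y$ to reduce $\xi$ to its two atoms.
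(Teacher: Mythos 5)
Your computation is correct and is exactly the elementary calculation the paper has in mind (the paper states the lemma "can be shown by an elementary computation" and omits the details). Conditioning on $Y$ to reveal the affine dependence on $p$, then differentiating and recombining the two expectations, is the intended argument, and all expectations involved are finite since the integrands are bounded by $1$.
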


\begin{lem}\label{l:key_ineq}
Let $k \ge 1$. Let  $\xi_1, \xi_2, \ldots, \xi_k$ be independent Bernoulli random variables with means $p_1, p_2, \ldots, p_k$ and let $W \ge 0$ be another independent random variable. 
Let  
\[ X= \xi_1 + \cdots + \xi_k \text { and  } \quad    Z_i  = X - \xi_i =   \sum_{j: j \ne i} \xi_j \text{ for each } i.\] 
Then we have
\begin{equation} \label{eq: sum_expec1}
  \sum_{i=1}^k p_i \E \Big [ \frac{1}{(1+ W+ Z_i)(2+ W + Z_i)} \Big ] \le \E \Big [ \frac{\mathbf{1}_{\{ X>0 \}}}{1+ W+ X} \Big].    
  \end{equation}
\end{lem}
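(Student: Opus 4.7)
The plan is to reduce the left-hand side to a single expectation by exploiting the independence of $\xi_i$ from $(W,Z_i)$, and then apply the trivial bound $X/(W+X)\le 1$.

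First, since $\xi_i$ is independent of $W$ and $Z_i$, I would rewrite
\[
p_i\,\E\!\left[\frac{1}{(1+W+Z_i)(2+W+Z_i)}\right] = \E\!\left[\frac{\xi_i}{(1+W+Z_i)(2+W+Z_i)}\right].
\]
The key observation is that on the event $\{\xi_i=1\}$ we have $Z_i = X - 1$, so
\[
(1+W+Z_i)(2+W+Z_i) \;=\; (W+X)(1+W+X),
\]
and of course the $\xi_i$ factor kills the term on $\{\xi_i=0\}$. Hence
\[
\E\!\left[\frac{\xi_i}{(1+W+Z_i)(2+W+Z_i)}\right] = \E\!\left[\frac{\xi_i}{(W+X)(1+W+X)}\mathbf{1}_{\{X>0\}}\right].
\]

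Next, I would sum over $i=1,\dots,k$ and use $\sum_{i=1}^k \xi_i = X$ to obtain
\[
\sum_{i=1}^k p_i\,\E\!\left[\frac{1}{(1+W+Z_i)(2+W+Z_i)}\right]
= \E\!\left[\frac{X\,\mathbf{1}_{\{X>0\}}}{(W+X)(1+W+X)}\right],
\]
where we interpret the integrand as $0$ on $\{X=0\}$, which is consistent since $X$ in the numerator then vanishes.

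Finally, since $W\ge 0$ and $X\ge 0$, on $\{X>0\}$ we have $X/(W+X)\le 1$, giving
\[
\frac{X}{(W+X)(1+W+X)} \;\le\; \frac{1}{1+W+X},
\]
which immediately yields \eqref{eq: sum_expec1}. There is no real obstacle here; the only point that requires mild care is handling $\{X=0\}$ so that one does not encounter a division by zero when $W=0$, and this is resolved by the indicator $\mathbf{1}_{\{X>0\}}$ that arises naturally from $\sum_i \xi_i = X$.
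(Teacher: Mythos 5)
Your proof is correct and follows essentially the same route as the paper: both replace $p_i$ by $\xi_i$ via independence, observe that on $\{\xi_i=1\}$ the denominator becomes $(W+X)(1+W+X)$ with $X>0$, sum to get $\E[X\mathbf{1}_{\{X>0\}}/((W+X)(1+W+X))]$, and finish with $X\le W+X$. No issues.
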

\begin{proof}
Using the independence of $\xi_i, Z_i $ and  $W,$ we can write the left-hand side of \eqref{eq: sum_expec1} as
\[    \sum_{i=1}^k  \E \Big [ \frac{\xi_i}{(1+ W+ Z_i)(2+ W+ Z_i)} \Big ].    \]
Note that $\xi_i = \xi_i \mathbf{1}_{ \{ \xi_i  = 1\} }$, and  on the event $\{\xi_i = 1 \}$,  we have $X = 1+Z_i >0$. Therefore, the above equals 
   \begin{align*}
   &   \sum_{i=1}^k  \E \Big [ \frac{\xi_i \mathbf{1}_{ \{ X>0\} }}{(W+X)(1+ W+ X)} \Big ]  =\E \Big [ \frac{X \mathbf{1}_{ \{ X>0\}}}{(W+X)(1+W+X)} \Big  ] \le \E \Big [ \frac{\mathbf{1}_{\{ X>0\}}}{1+W+X} \Big],
   \end{align*}
   where in the last inequality, we used the fact that $X \le W+X.$
\end{proof}

\begin{proof}[Proof of Lemma~\ref{lem:contracttion}]
For $k \ge 1,$ let
\[
U_k := \bigl\{\, v \in V(T) : \exists\, u \in \mathcal{L}_T^r \text{ such that } 
v \text{ lies on a path from } u \text{ to } o^\dagger \text{ and } 1\le \dist(v,o^\dagger)\le k \,\bigr\}.
\]
In particular, $U_r \setminus U_{r-1} = \mathcal{L}_T^r.$ Since $d_v \le D-1$, the recursion \eqref{eq:rec_p} ensures that irrespective of any boundary condition $a$, one has
\begin{equation} \label{q_bound_nonleaf}
  p^a(e_v) \ge \frac{1}{D},  \ \  \text{ for any } v \in U_{r-1}.  
\end{equation}
Moreover, if $v \in U_{r-2},$  then $v$ has at least one child $u \in U_{r-1}.$  Therefore,  by \eqref{q_bound_nonleaf}, we have 
\begin{equation}\label{p_upper_bd}
p^a(e_v) \le \E \!\left[ \frac{1}{1 + \xi^a(e_u)} \right] \;=\; 1 - \tfrac{1}{2} p^a(e_u) \le 1 - \tfrac{1}{2D}, \quad  \text{ for any } v \in U_{r-2}.
\end{equation}
By iterating the recursion up to two levels on the tree $T$, if necessary, we may assume that for any boundary condition $a$, the $p$-value at the root edge $p^{a}(e_o)$ is a function of the inputs $\{p^{a}(e_v): v \in U_{r-2}\setminus U_{r-3}\}$. This function is determined solely by the recursion at the vertices in $U_{r-3}$ given by~\eqref{eq:rec_q} and does not depend on the particular boundary condition $a$.

We aim to prove the lemma by establishing that the above function is contractive. Instead of working with $p$, we prove the contractivity after the reparametrization $q^a(e)=-\log p^a(e)\ge 0$. The relation \eqref{eq:rec_p} yields the following recurrence relation on $q$. For any $v \in U_{r-3}$,
\begin{equation}\label{eq:rec_q}
 q^a(e_v) = \Psi( q^a(e_{v_1}), \ldots, q^a(e_{v_{d_v}})):= - \log \Phi \big(e^{-q^a(e_{v_1})}, \ldots, e^{-q^a(e_{v_{d_v}})}\big).   \end{equation}
Fix $v\in U_{r-3}$. Let $k\ge 1$ be the number of children of $v$ that lie in $U_{r-2}\setminus U_{r-3}$, and let $\ell\ge 0$ be the number of its remaining children. Clearly $k+\ell=d_v$. Label the former by $v_1,\dots,v_k$ and the latter by $v_{k+1},\dots,v_{k+\ell}$. 

Note that the $p$-values on the edges $e_{v_i}$ for $k+1\le i\le k+\ell$ do not depend on the boundary condition $a$, since we set $\B^a(v)=0$ for all $v\in \mathcal{L}_T\setminus \mathcal{L}_T^r$. Consequently, the right-hand side of~\eqref{eq:rec_q} can be viewed as a function only of $q^a(e_{v_1}),\ldots,q^a(e_{v_k})$. The partial derivative of $\Psi(q^a(e_{v_1}), \ldots, q^a(e_{v_{k}}))$ with respect to $q^a(e_{v_i})$ for $ 1 \le i \le k$ satisfies  
\begin{equation}\label{partial_deriv_q_v}
\partial_{q^a(e_{v_i})} \Psi(q^a(e_{v_1}), \ldots, q^a(e_{v_{k+ \ell}}))
= \frac{p^a(e_{v_i})\, \partial_{p^a(e_{v_i})} \Phi(p^a(e_{v_1}), \ldots, p^a(e_{v_{k }}))}{p^a(e_{v})}.
\end{equation}
Let $\xi^a(e_{v_1}), \ldots, \xi^a(e_{v_{k+\ell}})$ be independent Bernoulli random variables with means $p^a(e_{v_1}), \ldots,$ $p^a(e_{v_{k+ \ell}})$, and define  
\[
X^a_v = \sum_{j=1}^{k} \xi^a(e_{v_j}), \quad Z^a_{v,i} = X^a_v - \xi^a(e_{v_i}), \quad 1 \le i \le k, \quad W^a_v = \sum_{j=k+1}^{k+ \ell} \xi^a(e_{v_j}).
\]
By \eqref{eq:rec_p}, \eqref{partial_deriv_q_v}, and Lemma~\ref{l:derivative},  for each $1 \le i \le k,$
\[
\partial_{q^a(e_{v_i})} \Psi(q^a(e_{v_1}), \ldots, q^a(e_{v_{d_v}}))
= -\frac{p^a(e_{v_i})}{p^a(e_{v})} \, \E\!\left[ \frac{1}{(1+W^a_v+ Z^a_{v,i})(2+W^a_v+ Z^a_{v,i})} \right] \le 0.
\]
Now denoting $\nabla \Psi(q^a(e_{v_1}), \ldots, q^a(e_{v_{k}}))$ by  the gradient 
\[
\big(\partial_{q^a(e_{v_1})} \Psi, \ldots, \partial_{q^a(e_{v_{k}})} \Psi\big) \in \mathbb{R}^{k},
\]
Lemma~\ref{l:key_ineq} yields the bound  
\begin{align*}
    \|\nabla \Psi(q^a(e_{v_1}), \ldots, q^a(e_{v_{k}}))\|_1
&= \frac{1}{p^a(e_v)} \sum_{i=1}^{k} p^a(e_{v_i})\, \E\!\left[ \frac{1}{(1+ W^a_v+ Z^a_{v,i})(2+  W^a_v+ Z^a_{v,i})} \right] \\
&\le \frac{\E \left[ \frac{\mathbf{1}_{\{X^a_v>0\}}}{1+W^a_v+ X^a_v} \right]}{\E \left[ \frac{1}{1+W^a_v + X^a_v} \right]}.
\end{align*}
Since $\mathbf{1}_{\{x>0 \}}$ is increasing and $\tfrac{1}{1+x}$ is decreasing in $x$, the correlation inequality implies that, conditioning on $W_v^a$,
 \[ \E \Big [ \frac{\mathbf{1}_{(X^a_v>0)}}{1+W^a_v + X^a_v} \Big] \le  \prob(X^a_v> 0) \E \Big [ \frac{1}{1+ W^a_v + X^a_v} \Big]. \]
Consequently, 
 \begin{align} \label{eq:rec_bound}
  \|\nabla \Psi(q^a(e_{v_1}), \ldots, q^a(e_{v_{k}}))\|_1 &\le \prob(X^a_v > 0) \nonumber \\
   &= 1 - \prod_{i=1}^{k} \big(1- p^a(e_{v_i})\big)  \nonumber \\
   &\le 1  - (2D)^{-D},
   \end{align}
where the final inequality follows from \eqref{p_upper_bd} together with the fact that $v_1, \ldots, v_k \in U_{r-2}.$
    At the root edge $e_o$, the $q$-value, $q^a(e_o)$ is a function of $\{q^{a}(e_v): v \in U_{r-2}\setminus U_{r-3}\}$, whose values lie in $[0, \log D]$ by  \eqref{q_bound_nonleaf}.  For a vertex $v \in U_{r-2}\setminus U_{r-3},$ let $v_1 = o \sim  v_1 \sim v_2 \sim \cdots \sim v_{r-2} =v$ be the unique self-avoiding path from $o$ to $v$. By the chain rule,
\[ \partial_{q^a(e_v)} q^a(e_o) = \prod_{i=1}^{r-3} \partial_{q^a(e_{v_{i+1}})} q^a(e_{v_{i}}). \]

Summing over all vertices $U_{r-2}\setminus U_{r-3}$ and then applying the gradient bound \eqref{eq:rec_bound} for $q^a(e_v)$
for vertices at level
 $k=r-3, r-4, \ldots, 1$, i.e.,  vertices in $U_k \setminus U_{k-1}$, and proceeding up the tree level by level, we obtain  
 \[  \| \nabla q^a(e_o) \|_1 \le \big( 1  - (2D)^{-D} \big)^{r-3}.\]
Therefore, by mean value theorem, 
   \begin{align*}
   \big|  q^{a'}(e_o)  - q^{a''}(e_o)  \big| 
   &\le \big( 1  - (2D)^{-D} \big)^{r-3}  \max_{v \in U_{r-2} \setminus U_{r-3}} \big | q^{a'}(e_v)  - q^{a''}(e_v)\big|\\
   &\le \log D  \big( 1  - (2D)^{-D} \big)^{r-3}  \le D \big( 1  - (2D)^{-D} \big)^{r-3}.
   \end{align*}
The lemma follows from the observation that
   \begin{align*}
   \big|  p^{a'}(e_o)  - p^{a''}(e_o)  \big|  \le    \big|  q^{a'}(e_o)  - q^{a''}(e_o)  \big| .
   \end{align*}
   
Finally, the case $a'' \equiv +\infty$ follows from the monotone convergence theorem, together with the monotonicity property \eqref{anti_monotone} of the bonuses with respect to the boundary condition.
\end{proof}

\subsubsection{Proof of Theorem~\ref{thm: temp_corr_decay}}
We now prove Theorem~\ref{thm: temp_corr_decay}. Suppose that $T := \bB_e^r(G)$ is a tree. Let $T_{v \to u}$ and $T_{u \to v}$ be the subtrees rooted at $u$ and $v$, respectively, after we remove the edge $e =(uv)$ from $T$. Note that each of these two trees has depth at most $r$, and at least one has depth exactly $r$.
By \eqref{eq: antimonotone_stronger},  for any 
$A\subseteq \partial T,$
\begin{align*}
        \mathbf{1}_{\{w_{(uv)} >  \B_r^{\infty}(u, T_{v \to u}) + \B_r^{\infty}(v, T_{u \to v})\}} &\le  \mathbf{1}_{\{e \in \M_{T\setminus A}\}} \le \mathbf{1}_{\{w_{(uv)} >  \B_r^{0}(u, T_{v \to u}) + \B_r^{0}(v, T_{u \to v})\}} \quad & \text{ if $r$ is even,}\\
                \mathbf{1}_{\{w_{(uv)} >  \B_r^{0}(u, T_{v \to u}) + \B_r^{0}(v, T_{u \to v})\}} &\le  \mathbf{1}_{\{e \in \M_{T\setminus A}\}} \le \mathbf{1}_{\{w_{(uv)} >  \B_r^{\infty}(u, T_{v \to u}) + \B_r^{\infty}(v, T_{u \to v})\}} \quad &\text{ if $r$ is odd}.
    \end{align*}
This implies that
\begin{align*}
        &\sup_{A\subseteq \partial T} |\mathbf{1}_{\{e\in \M_{T}\}} - \mathbf{1}_{\{e \in \M_{T\setminus A}\}}| \\
        \leq&\;  (-1)^{r+1}\Big(\mathbf{1}_{\{w_{(uv)} >  \B_r^{\infty}(u, T_{v \to u}) + \B_r^{\infty}(v, T_{u \to v})\}}- \mathbf{1}_{\{w_{(uv)} >  \B_r^{0}(u, T_{v \to u}) + \B_r^{0}(v, T_{u \to v})\}}\Big).
\end{align*}
Taking expectations on both sides, we obtain 
\begin{align}
        &\E \sup_{A \subseteq \partial T} |\mathbf{1}_{\{e\in \M_{T}\}} - \mathbf{1}_{\{e \in \M_{T\setminus A}\}}| \nonumber\\
        \label{eq: difference_prob_2}
        \leq \;& \Big| \prob(w_{(uv)} >  \B_r^{\infty}(u, T_{v \to u}) + \B_r^{\infty}(v, T_{u \to v})) - \prob(w_{(uv)} >  \B_r^{0}(u, T_{v \to u}) + \B_r^{0}(v, T_{u \to v})) \Big|.
\end{align}
Note that $ w_{(uv)}, \B_r^{\infty}(u, T_{v \to u}), \B_r^{\infty}(v, T_{u \to v}) $ are independent due to edge disjointness; the same holds for  $ w_{(uv)}, \B_r^{0}(u, T_{v \to u}), \B_r^{0}(v, T_{u \to v})$. Thus, by the memoryless property of the exponential distribution, we have
\begin{align*}
\prob\big(w_{(uv)} >  \B_r^{\infty}(u, T_{v \to u}) + \B_r^{\infty}(v, T_{u \to v})\big) &= \prob \big(w_{(uv)} >  \B_r^{\infty}(u, T_{v \to u})\big) \prob \big(w_{(uv)} > \B_r^{\infty}(v, T_{u \to v})\big),\\
\prob\big (w_{(uv)} >  \B_r^{0}(u, T_{v \to u}) + \B_r^{0}(v, T_{u \to v})\big) &= \prob\big (w_{(uv)} >  \B_r^{0}(u, T_{v \to u})\big) \prob\big (w_{(uv)} > \B_r^{0}(v, T_{u \to v})\big).
\end{align*}

Let $T^+_{v \to u}$ and $T^+_{u \to v}$ be the subtrees of $T$ formed by adding the edge $e = (uv)$ to the trees $T_{v \to u}$ and $T_{u \to v},$ respectively, and relocating the roots to $v$ and $u$, respectively. Using the notation of Lemma~\ref{lem:contracttion}, we define the following probabilities on the edge $e$ in the tree  $T^+_{v \to u},$
\[p^\infty_{e}(T^+_{v \to u}) := \prob(w_{(uv)} >  \B_r^{\infty}(u, T_{v \to u})), \quad p^0_{e}(T^+_{v \to u}) := \prob(w_{(uv)} >  \B_r^{0}(u, T_{v \to u})). \]
Similarly, for the tree $T^+_{u \to v},$ we define the probabilities 
$p^\infty_{e}(T^+_{u \to v}) $ and $p^0_{e}(T^+_{u \to v})$. We can then express and bound \eqref{eq: difference_prob_2} as 
\begin{align}\label{eq:pfactor}
   \big| p^\infty_{e}(T^+_{v \to u}) p^\infty_{e}(T^+_{u \to v}) - p^0_{e}(T^+_{v \to u})p^0_{e}(T^+_{u \to v}) \big|
   \le | p^\infty_{e}(T^+_{v \to u}) - p^0_{e}(T^+_{v \to u})| + | p^\infty_{e}(T^+_{u \to v}) - p^0_{e}(T^+_{u \to v})|.
\end{align}
If the  tree $T_{v \to u}$  has depth strictly less than $r$, then  $\B_r^{\infty}(u, T_{v \to u}) =  \B_r^{0}(u, T_{v \to u})$ and hence, $p^\infty_{e}(T^+_{v \to u})  =  p^0_{e}(T^+_{v \to u}).$ The same conclusion holds for the tree $T_{u \to v}.$ Therefore, we may assume the nontrivial case in which both have depth $r$; in that case, both $T^+_{v \to u}$ and $T^+_{u \to v}$ have depth $r+1$.

To bound the the right-hand side of \eqref{eq:pfactor}, we apply Lemma~\ref{lem:contracttion} twice for the trees $T^+_{v \to u}$ and $T^+_{u \to v}$ (with boundary conditions $a' = 0$ and $a'' = \infty$)   to obtain
\[
\E\sup_{A\subseteq \partial T} |\mathbf{1}_{\{e\in \M_{T}\}} - \mathbf{1}_{\{e \in \M_{T\setminus A}\}}|\leq 2D\big( 1  - (2D)^{-D} \big)^{r-2}.
\]
This completes the proof of Theorem~\ref{thm: temp_corr_decay}.

\subsection{Proof of Theorem~\ref{thm: corr_decay_degree_3}}
We start with a simple lemma.
\begin{lem} \label{lem:exp_indentity}
    Let $X_1, X_2, \ldots, X_k$ be i.i.d.\ exponentials with mean $1$. For $c_1, c_2, \ldots, c_k \ge 0$, we have 
    \[ \E[ e^{-\max_{1 \le i \le k}  (X_i -c_i)_+}] =  \sum_{S \subseteq [k]} \frac{1}{1+ |S|} \prod_{i \in S} e^{-c_i} \prod_{i \not \in S} (1-e^{-c_i}). \]
\end{lem}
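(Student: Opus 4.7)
The approach is to decompose according to the random set of indices that ``survive'' the threshold, namely $S=\{i:X_i>c_i\}$, and then use the memoryless property of the exponential distribution.

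First, for each $i$, let $\xi_i = \mathbf{1}_{\{X_i > c_i\}}$. Since the $X_i$ are independent $\mathrm{Exp}(1)$, the $\xi_i$ are independent Bernoulli with $\prob(\xi_i=1) = e^{-c_i}$. By the memoryless property, conditional on $\xi_i = 1$, the increment $X_i - c_i$ is again $\mathrm{Exp}(1)$, and conditional on $\xi_i = 0$, we have $(X_i - c_i)_+ = 0$. Moreover, conditional on the vector $(\xi_1,\ldots,\xi_k)$, the variables $\{X_i - c_i : i \in S\}$ are independent $\mathrm{Exp}(1)$, where $S=\{i:\xi_i=1\}$.

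So conditioning on $S$ and noting that $\max_{1\le i \le k}(X_i-c_i)_+ = \max_{i\in S}(X_i - c_i)$ (with the convention that the maximum over an empty set is $0$), we obtain
\[
\E\bigl[e^{-\max_i (X_i - c_i)_+}\bigr]
= \sum_{S \subseteq [k]} \Bigl(\prod_{i\in S} e^{-c_i}\Bigr)\Bigl(\prod_{i\notin S}(1-e^{-c_i})\Bigr)\, \E\bigl[e^{-M_{|S|}}\bigr],
\]
where $M_m$ denotes the maximum of $m$ i.i.d.\ $\mathrm{Exp}(1)$ variables (with $M_0:=0$).

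The only computation left is $\E[e^{-M_m}] = \frac{1}{1+m}$. This follows from $\prob(M_m \le t) = (1-e^{-t})^m$ for $t\ge 0$, so with the substitution $u = 1-e^{-t}$,
\[
\E[e^{-M_m}] = \int_0^\infty e^{-t}\, m(1-e^{-t})^{m-1} e^{-t}\, dt = m\int_0^1 (1-u)u^{m-1}\, du = \frac{1}{m+1},
\]
which also holds for $m=0$. Plugging this back in yields the claimed identity. There is no substantive obstacle; the key conceptual point is simply that memorylessness turns the conditional distribution of $X_i - c_i$ given $\xi_i=1$ back into $\mathrm{Exp}(1)$, which is what makes the sum factor into the clean form above.
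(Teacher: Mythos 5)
Your proof is correct and follows essentially the same route as the paper's: partition according to the set $S$ of indices with $X_i>c_i$, apply the memoryless property to reduce to the maximum of $|S|$ fresh exponentials, and compute $\E[e^{-M_{|S|}}]=(1+|S|)^{-1}$ (which the paper leaves as an ``elementary calculation'' and you carry out explicitly).
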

\begin{proof}
Let $ X_1', X_2', \ldots, X_k' $ be i.i.d.\ exponential random variables with mean 1, independent of $ X_1, X_2, \ldots, X_k$. By the memoryless property of the exponential distribution, we can express the left-hand side as
   \begin{align*}
    &\sum_{S \subseteq [k]}   \E[ e^{-\max_{1 \le i \le k}  (X_i -c_i)_+} \mathbf{1}_{\{ X_i > c_i \;\forall i \in S, \ \ X_i \le c_i  \;\forall i \not \in S^c \}}]  \\
    =& \sum_{S \subseteq [k]}   \E[ e^{-\max_{i \in S}  X_i'} \mathbf{1}_{\{ X_i > c_i \;\forall i \in S, \ \ X_i \le c_i  \;\forall i \not \in S^c \}}] \\
     =& \sum_{S \subseteq [k]}   \E[ e^{-\max_{i \in S}  X_i'}]  \prod_{i \in S} e^{-c_i} \prod_{i \not \in S} (1-e^{-c_i}).
    \end{align*}
   An elementary calculation shows that $\E[ e^{-\max_{i \in S}  X_i'}] = (1+ |S|)^{-1}$, and the lemma follows. 
\end{proof}

We now continue with the proof of Theorem~\ref{thm: corr_decay_degree_3}. Fix a subgraph $H$ of $G$ with $|V(H)| \ge 1$. For any vertex $u \in V(H)$, let $d_u = d_u(H)$ denote the degree of $u$ in $H$. Let $ u_1, \ldots, u_{d_u}$ denote the set of neighbors of $u$. Note that although the bonuses $\B(u_i, H \setminus \{u\})$ for $1 \leq i \leq d_u$ may not be mutually independent, they are independent of the edge weights $w_{(uu_i)}$ for $1 \leq i \leq d_u$.
Using the bonus recursion \eqref{eq: bonus_recursion} and Lemma~\ref{lem:exp_indentity}, it follows that 
\begin{align}\label{eq:recursion1}
    \E[e^{-\B(u, H)} \mid \B(u_i, H\setminus\{u\}), 1\leq i\leq d_u] 
    =& \sum_{S\subseteq [d_u]} \frac{1}{1+|S|} \prod_{i\in S} e^{-\B(u_i, H\setminus\{u\})}\prod_{i\not\in S} (1 - e^{-\B(u_i, H\setminus\{u\})}).
\end{align}

Let us now assume that  $d_u$ is at most $2$. 
When $d_u = 2$, \eqref{eq:recursion1} becomes
\begin{align}
    &\E[e^{-\B(u, H)} \mid \B(u_1, H\setminus\{u\}), \B(u_2, H\setminus\{u\})] \nonumber\\
    \label{eq: degree_2}
    =&\; 1 - \frac{1}{2}(e^{-\B(u_1, H\setminus\{u\})} + e^{-\B(u_2, H\setminus\{u\})}) + \frac{1}{3} e^{-(\B(u_1, H\setminus\{u\}) + \B(u_2, H\setminus\{u\}))},
\end{align}
whereas when $d_u = 1$, we have
\begin{align}
    \label{eq: degree_1}
    \E[e^{-\B(u, H)} \mid \B(u_1, H\setminus\{u\})] = 1 - \frac{1}{2}e^{-\B(u_1, H\setminus\{u\})}.
\end{align}

From now on, we assume that $r$ is a fixed even integer (the case where $r$ is odd is analogous). If $d_u = 1$, we write $u_1$ for the unique neighbor of $u$ in $H$; if $d_u = 2$, we write $u_1$ and $u_2$ for the two neighboring vertices of $u$. Recall the local bounds in Section~\ref{sec: local_bounds}. To simplify the notations, we will write 
\begin{align*}
    \B_r^\infty &= \B_r^\infty(u, H), & \B_r^0 &= \B_r^0(u, H),&\\
    \B_{r-1}^\infty(i) &= \B_{r-1}^\infty(u_i, H \setminus \{u\}), & \B_{r-1}^0(i) &= \B_{r-1}^0(u_i, H \setminus \{u\}).&
\end{align*}
Since $r$ is even, we have
\[
\B_r^\infty \geq \B_r^0, \quad \B_{r-1}^\infty(i) \leq \B_{r-1}^0(i).
\]
When $d_u = 1$, it follows from \eqref{eq: degree_1} that
\begin{align}\label{bdd:d_u=1}
0 \le \E e^{-\B_r^0} - \E e^{-\B_r^\infty} \leq \frac{1}{2}\big( \E e^{-\B_{r-1}^\infty(1)} - \E e^{-\B_{r-1}^0(1)} \big).
\end{align}
The case $d_u = 2$ is slightly more complicated. By \eqref{eq: degree_2}, we have
\begin{align}
\label{eq: long_eq}
\begin{split}
    \E e^{-\B_r^0} - \E e^{-\B_r^\infty} =&\; \frac{1}{2} [(\E e^{-\B_{r-1}^\infty(1)} - \E e^{-\B_{r-1}^0(1)}) + (\E e^{-\B_{r-1}^\infty(2)} - \E e^{-\B_{r-1}^0(2)})] \\
    &+ \frac{1}{3} \E[e^{-(\B_{r-1}^0(1) + \B_{r-1}^0(2))} - e^{-(\B_{r-1}^\infty(1) + \B_{r-1}^\infty(2))}].
\end{split}    
\end{align}
Note that
\begin{align*}
e^{-(\B_{r-1}^0(1) + \B_{r-1}^0(2))} - e^{-(\B_{r-1}^\infty(1) + \B_{r-1}^\infty(2))} =&\; (e^{-\B_{r-1}^0(1)} - e^{-\B_{r-1}^\infty(1)})e^{-\B_{r-1}^0(2)} \\
&+ (e^{-\B_{r-1}^0(2)} - e^{-\B_{r-1}^\infty(2)}) e^{-\B_{r-1}^\infty(1)}.
\end{align*}
Putting this back into \eqref{eq: long_eq}, we have
\begin{align}
\label{eq: first_term}
\E e^{-\B_r^0} - \E e^{-\B_r^\infty} =&\; \frac{1}{2}\E\left[\left(1 - \frac{2}{3} e^{-\B_{r-1}^0(2)}\right)\left(e^{-\B_{r-1}^\infty(1)} - e^{-\B_{r-1}^0(1)}\right)\right]\\
\label{eq: second_term}
&+ \frac{1}{2}\E\left[\left(1 - \frac{2}{3} e^{-\B_{r-1}^\infty(1)}\right)\left(e^{-\B_{r-1}^\infty(2)} - e^{-\B_{r-1}^0(2)}\right)\right].
\end{align}

We first handle \eqref{eq: first_term}. Fix $\varepsilon \in (0, 1)$. Then
\begin{align*}
    & \E\left[\left(1 - \frac{2}{3} e^{-\B_{r-1}^0(2)}\right)\left(e^{-\B_{r-1}^\infty(1)} - e^{-\B_{r-1}^0(1)}\right)\right]\\
    =&\; \E\left[\left(1 - \frac{2}{3} e^{-\B_{r-1}^0(2)}\right)\left(e^{-\B_{r-1}^\infty(1)} - e^{-\B_{r-1}^0(1)}\right)\mathbf{1}_{\{e^{-\B_{r-1}^0(2)} > \frac{3}{2}\varepsilon\}}\right] \\
    &+ \E\left[\left(1 - \frac{2}{3} e^{-\B_{r-1}^0(2)}\right)\left(e^{-\B_{r-1}^\infty(1)} - e^{-\B_{r-1}^0(1)}\right)\mathbf{1}_{\{e^{-\B_{r-1}^0(2)} \leq \frac{3}{2}\varepsilon\}}\right]\\
    \le&\; (1-\varepsilon)\E\left[\left(e^{-\B_{r-1}^\infty(1)} - e^{-\B_{r-1}^0(1)}\right)\mathbf{1}_{\{e^{-\B_{r-1}^0(2)} > \frac{3}{2}\varepsilon\}}\right] + \E\left[\left(e^{-\B_{r-1}^\infty(1)} - e^{-\B_{r-1}^0(1)}\right)\mathbf{1}_{\{e^{-\B_{r-1}^0(2)} \leq \frac{3}{2}\varepsilon\}}\right]\\
    =&\;(1-\varepsilon)\E\left[e^{-\B_{r-1}^\infty(1)} - e^{-\B_{r-1}^0(1)}\right] + \varepsilon \E\left[\left(e^{-\B_{r-1}^\infty(1)} - e^{-\B_{r-1}^0(1)}\right)\mathbf{1}_{\{e^{-\B_{r-1}^0(2)} \leq \frac{3}{2}\varepsilon\}}\right]\\
    \leq &\; (1-\varepsilon)\E\left[e^{-\B_{r-1}^\infty(1)} - e^{-\B_{r-1}^0(1)}\right] + \varepsilon\prob\left(e^{-\B_{r-1}^0(2)} \leq \frac{3}{2}\varepsilon\right).
\end{align*}
Note that $\B_{r-1}^0(2)$ is stochastically dominated by $\max (w_1, w_2)$, where $w_1$ and $w_2$ are i.i.d.\ exponential random variables with parameter $1$. So
\begin{align*}
    \prob\left(e^{-\B_{r-1}^0(2)} \leq \frac{3}{2}\varepsilon\right) &= \prob\left(\B_{r-1}^0(2) \geq \log\frac{2}{3\varepsilon}\right)\\
    &\leq \prob\left(\max (w_1, w_2) \geq \log\frac{2}{3\varepsilon}\right)\\
    & \leq 2\prob\left(w_1 \geq \log\frac{2}{3\varepsilon}\right) = 3\varepsilon.
\end{align*}
Hence, we have
\[
\E\left[\left(1 - \frac{2}{3} e^{-\B_{r-1}^0(2)}\right)\left(e^{-\B_{r-1}^\infty(1)} - e^{-\B_{r-1}^0(1)}\right)\right] \leq (1-\varepsilon)\E\left[e^{-\B_{r-1}^\infty(1)} - e^{-\B_{r-1}^0(1)}\right] + 3\varepsilon^2.
\]
Similarly, \eqref{eq: second_term} can be bounded by
\[
\E\left[\left(1 - \frac{2}{3} e^{-\B_{r-1}^\infty(1)}\right)\left(e^{-\B_{r-1}^\infty(2)} - e^{-\B_{r-1}^0(2)}\right)\right] \leq (1-\varepsilon)\E\left[e^{-\B_{r-1}^\infty(2)} - e^{-\B_{r-1}^0(2)}\right] + 3\varepsilon^2.
\]
Thus, in the case $d_u=2$, we have, for all $\varepsilon \in (0, 1)$,
\begin{align}\label{bdd:d_u=2}
   |\E e^{-\B_r^0} - \E e^{-\B_r^\infty}| \leq (1-\varepsilon)\max_{i=1,2}|\E e^{-\B_{r-1}^0(i)} - \E e^{-\B_{r-1}^\infty(i)}| + 3\varepsilon^2. 
\end{align}

Let
\[
M_r = \max_{ H } \max_{u\in V(H)}|\E e^{-\B_r^0(u, H)} - \E e^{-\B_r^\infty(u, H)}|,
\]
where the maximum is taken over all subgraphs $H$ of $G$ and $u \in V(H)$ such that $d_u \le 2$. Since the degree of $u_i$ in $H \setminus \{u\}$ is at most $2$, from \eqref{bdd:d_u=1} and \eqref{bdd:d_u=2}, we  have the following recursion
\[
M_r \leq (1-\varepsilon)M_{r-1} + 3\varepsilon^2, \quad M_0 =  1,
\]
which holds for all $\varepsilon \in (0, 1).$
From this, one can deduce that for all $r \ge 1,$
\begin{align} \label{eq:estimate_degree_max3}
M_r \leq (1-\varepsilon)^r + 3\varepsilon.
\end{align}

Now, let $e=(uv)$ be an edge of $G$ and set 
\[ H' = \bB_e^r(G)  \setminus \{ e \}, \quad H'' =  \bB_e^r(G)  \setminus \{u\}. \]
Let $A \subseteq \partial \bB_e^r(G)$. Using the criterion \eqref{eq: equivalence}, we have 
\begin{align}\label{eq:bd1}
\big| \mathbf{1}_{ \{ e \in \M_{\bB_e^r(G)} \}} - \mathbf{1}_{ \{ e \in \M_{\bB_e^r(G) \setminus A} \}}\big|
&= \big| \mathbf{1}_{ \{ w_e >\B(u, H') + \B(v, H'')\}} - \mathbf{1}_{ \{ w_e > \B(u, H' \setminus A) + \B(v, H'' \setminus A)\}}\big|.
\end{align}
 The vertices of $\partial \bB_e^r(G)$ in $H'$ (respectively $H''$) are at distance $r$ or $r+1$ from $u$ in $H'$ (respectively from $v$ in $H''$).
Therefore, by \eqref{eq: antimonotone_stronger}, for any $A \subseteq \partial \bB_e^r(G)$, we have 
\begin{align*}
 (-1)^r \B_r^0(u, H') \le    (-1)^r \B(u, H' \setminus A) \le (-1)^r \B_r^\infty(u, H'), \\
  (-1)^r \B_r^0(v, H'') \le    (-1)^r \B(v, H'' \setminus A) \le (-1)^r \B_r^\infty(v, H'').
\end{align*}
Therefore, from \eqref{eq:bd1}, we derive that 
\begin{align*}
 \sup_{A \subseteq \partial \bB_e^r(G)} \big| \mathbf{1}_{ \{ e \in \M_{\bB_e^r(G)} \}} - \mathbf{1}_{ \{ e \in \M_{\bB_e^r(G) \setminus A} \}}\big|  \le (-1)^{r+1}  \big ( \mathbf{1}_{ \{ w_e >\B_r^0(u, H') + \B_r^0(v, H'')\}} - \mathbf{1}_{ \{ w_e > \B_r^\infty(u, H') + \B_r^\infty(v, H'')  \}} \big).
\end{align*}
Taking expectations on both sides of the inequality above, and noting that $w_e$ is independent of the weights used in the bonuses on the right-hand side, we obtain
\begin{align*}
\E \sup_{A \subseteq \partial \bB_e^r(G)} &\big| \mathbf{1}_{ \{ e \in \M_{\bB_e^r(G)} \}} - \mathbf{1}_{ \{ e \in \M_{\bB_e^r(G) \setminus A} \}}\big|  \le \big | \E[ e^{-\B_r^0(u, H')} e^{-\B_r^0(v, H'')}] - \E[e^{-\B_r^\infty(u, H')} e^{- \B_r^\infty(v, H'')} ] \big| \\
&\le \big | \E[ e^{-\B_r^0(u, H')} ] - \E[e^{-\B_r^\infty(u, H')}  ] \big| + 
\big | \E[ e^{-\B_r^0(v, H'')}] - \E[ e^{- \B_r^\infty(v, H'')} ] \big| \le 2M_r,
\end{align*}
where the second step above follows from the triangle inequality. The proof of the theorem now follows directly from the bound in \eqref{eq:estimate_degree_max3}.

\section{Maximum weight matching on an infinite graph}\label{sec_mwm_infinite}
\subsection{Existence and basic properties}
In this section, we prove Proposition~\ref{thm: MWM_inf_graph}.

\begin{proof}
\noindent (a)
 Define 
 \[ X_k = \sup_{m, n \ge k} |\mathbf{1}_{\{e \in \M_{G_n}\}} - \mathbf{1}_{\{e \in \M_{G_m}\}}|.\]
Fix $ r \ge 1.$ Since $(G_n)$ exhausts $G$, there exists $n_r \ge 1$  large such that 
    $E(\bB_e^r(G)) \subseteq E(G_n)$ for all $n \ge n_r.$
   Let $n \ge n_r$. Consider the MWM $\M_{G_n}$ on $G_n$, and let $\mathsf{A}_n$ be the set of all vertices in $\partial \bB_e^r(G_n)$ that are matched in $\M_{G_n}$ with some vertex in $V(G_n) \setminus V(\bB_e^r(G))$. By Observation~\ref{obs: forbidden_vertices} 
    \begin{equation}
    \mathbf{1}_{\{e \in \M_{\bB_e^r(G) \setminus \mathsf{A}_n}\}} = \mathbf{1}_{\{e \in \M_{G_n}\}} \quad \text{almost surely.}
    \end{equation}
Therefore, for $m, n \ge n_r,$ we have
    \begin{equation}\label{ineq:triangle_G_n}
          |\mathbf{1}_{\{e \in \M_{G_n}\}} - \mathbf{1}_{\{e \in \M_{G_m}\}}| \leq 2\sup_{A\subseteq \partial \bB_e^{r}(G)} |\mathbf{1}_{\{e \in \M_{\bB_e^{r}(G)}\}} - \mathbf{1}_{\{e \in \M_{\bB_e^{r}(G) \setminus A}\}}|. 
    \end{equation}
    Hence,
    \[ X_{n_r} \le 2\sup_{A\subseteq \partial \bB_e^{r}(G)} |\mathbf{1}_{\{e \in \M_{\bB_e^{r}(G)}\}} - \mathbf{1}_{\{e \in \M_{\bB_e^{r}(G) \setminus A}\}}|, \]
    which converges to $0$ in probability as $r\to \infty$ from the correlation decay assumption.
    Since $(X_k)$ is non-increasing, it converges almost surely.  Thus, the sequence $(\mathbf{1}_{\{e \in \M_{G_n}\}})$ is almost surely Cauchy, which implies that
    \[
    \lim_{n\to\infty} \mathbf{1}_{\{e\in \M_{G_n}\}} \quad \text{exists almost surely.}
    \]

    \noindent (b)   Fix $e \in E(G)$, and let $(G_n)$ and $(G_n')$ be two sequences of finite graphs increasing to $G$. For each $r$, choose  $n_r$ sufficiently large be  such that 
    $E(\bB_e^r(G)) \subseteq E(G_{n_r}) \cap E(G'_{n_r}) $. Similar to \eqref{ineq:triangle_G_n}, we have 
    \[  |\mathbf{1}_{\{e \in \M_{G_{n_r}}\}} - \mathbf{1}_{\{e \in \M_{G'_{n_r}}\}}| \leq 2\sup_{A\subseteq \partial \bB_e^{r}(G)} |\mathbf{1}_{\{e \in \M_{\bB_e^{r}(G)}\}} - \mathbf{1}_{\{e \in \M_{\bB_e^{r}(G) \setminus A}\}}|, \]
    which converges to $0$ in probability as $r \to \infty$ by correlation decay assumption. Consequently, 
    \[
     \lim_{n\to\infty} \mathbf{1}_{\{e \in \M_{G'_{n}}\}} = \lim_{r\to\infty} \mathbf{1}_{\{e \in \M_{G'_{n_r}}\}} = \lim_{r\to\infty} \mathbf{1}_{\{e \in \M_{G_{n_r}}\}} = \mathbf{1}_{\{e \in \M_G\}} \quad \text{almost surely.}
    \]
    
    \noindent (c) This is clear from the construction of $\M_G$.

    \noindent (d)   Let $(G_n)$ be a sequence of finite graphs that increases to $G$. By part (a),  for each $e \in E(G),$
    \[
    \lim_{n\to\infty} \mathbf{1}_{\{e \in \M_{G_n}\}} = \mathbf{1}_{\{e \in \M_G\}}  \quad \text{almost surely.}
    \]
    Fix a realization of the weights on this full probability event. Since $M\triangle \M_G$ is finite, there exists $n_0$ such that
$E(M\triangle \M_G)\subseteq E(G_n)$ for all $n\ge n_0$. Choose $ m \ge n_0$ sufficiently large that 
     \[ \mathbf{1}_{\{e \in \M_{G_m}\}} = \mathbf{1}_{\{e \in \M_G\}}  \quad \text{ for all } e \in E(M \triangle \M_G).
    \]
    Since $\M_G$ and $M$ agree outside $G_m$, we have 
 $\M_G \triangle M =  \M_{G_m} \triangle M_m,$ where 
$M_m$  is the restriction of $M$ to the finite graph $G_m$. Consequently, 
    \begin{equation}\label{eq:sum_alternating_local}
       \sum_{ e \in \M_{G} \setminus M}  w_e  -  \sum_{ e \in M \setminus \M_{G}} w_e = \sum_{ e \in \M_{G_m} \setminus M_m}  w_e  -  \sum_{ e \in M_m \setminus \M_{G_m}} w_e. 
    \end{equation}
Note that $\M_{G_m}\triangle M_m$ is a disjoint union of finite paths (or cycles) with edges alternating between $\M_{G_m}$ and $M_m$. If the right side of \eqref{eq:sum_alternating_local} were negative, this would yield an augmenting path for $\M_{G_m}$, contradicting that $\M_{G_m}$ is a MWM (Lemma~\ref{lem: augmenting}). Hence both sides of \eqref{eq:sum_alternating_local} are nonnegative, and they are strictly positive under generic weights.
\end{proof}

\subsection{Continuity of MWM with respect to local weak convergence}
Next, we prove Proposition~\ref{lem: local_weak_convergence_matching}.

\begin{proof}
    By  Lemma~\ref{lem: local_weak_convergence}, $(G_n, o_n, (w_e)_{e\in E(G_n)})$ converges locally weakly to $(G, o, (w_e)_{e\in E(G)})$ in $\mathcal{G}^*_{\R}$. Since $\{0, 1\}$ is compact, the measures  $(G_n, o_n, (w_e)_{e\in E(G_n)}, (\mathbf{1}_{\{e \in \M_{G_n}\}})_{e \in E(G_n)})$ is tight on $\mathcal{G}^*_{\mathbb{R}^2}$ and there exists a subsequence (which we will still index by $n$ for notational convenience) along which   $(G_n, o_n, (w_e)_{e\in E(G_n)}, (\mathbf{1}_{\{e \in \M_{G_n}\}})_{e \in E(G_n)})$ converges to $(G, o, (w_e)_{e\in E(G)}, (\mathbf{1}_{\{e \in M\}})_{e \in E(G)})$ locally weakly in $\mathcal{G}^*_{\R^2}.$
It remains to show that $(G,o,(w_e)_{e\in E(G)},M)$ and $(G,o,(w_e)_{e\in E(G)},\M_G)$ have the same distribution.

By the Skorokhod representation theorem, we can construct some probability space on which $\big(G_n, o_n, (w_e)_{e\in E(G_n)}, (\mathbf{1}_{\{e \in \M_{G_n}\}})_{e \in E(G_n)}\big)$ converges to $\big(G, o, (w_e)_{e\in E(G)}, (\mathbf{1}_{\{e \in M\}})_{e \in E(G)}\big)$  in the metric space $\big( \mathcal{G}^*_{\mathbb{R}^2}, d_{\mathrm{loc}}^w)$ almost surely. 

Fix an edge $e=(uv) \in E(G)$ and $r \ge 1.$  Choose $R$ large such $\bB_{e}^{r+1}(G) \subseteq \bB_{o}^{R}(G)$. For all $n \ge n_R$ sufficiently large, there exists a rooted isomorphism $\phi_n^{R}$ from $\bB_{o}^{R}(G)$ to $\bB_{o_n}^{R}(G_n)$ such that almost surely
    \begin{equation}\label{eq:local_weak_cons1}
    \bB_{o_n}^{R}(G_n) = \phi_n^{R}( \bB_o^{R}(G)), \quad  \M_{G_n} \cap E(\bB_{o_n}^{R}(G_n)) = \phi_n^{R}\big(M \cap E(\bB_o^{R}(G))\big)
    \end{equation}
    and furthermore, 
    \[
    w_{\phi_n^{R}(e)} \to w_e \quad \text{for all $e \in E(\bB_o^{R}(G))$ as }  \ n \to \infty.
    \]
For $n \ge n_R$, let $e_n  = \phi_n^R(e) \in E(G_n)$ and define  $\mathsf{A}^r_{n} \subseteq \partial \bB_{e_n}^r(G_n)$ to be the set of vertices $x$ for which there exists $y$ with $(xy)\in \M_{G_n}$ but $(xy)\notin E(\bB_{e_n}^r(G_n))$. Then thanks to \eqref{eq:local_weak_cons1},  $\mathsf{A}^r_n$ stabilizes to some set $\mathsf{A}^r \subseteq \partial \bB_{e}^r(G)$, in the sense that almost surely, 
    \begin{equation}\label{eq:local_weak_cons2}
        \mathsf{A}^r_{n}  = \phi_n^{R}(\mathsf{A}^r)  \quad \text{for all $n \ge n_R$.}        
        \end{equation}
 It follows from Observation~\ref{obs: forbidden_vertices} that
    \begin{align*}
        \mathbf{1}_{\{e_n \in \M_{G_n}\}} &= \mathbf{1}_{\{ e_n \in \M_{\bB_{e_n}^r(G_n) \setminus \mathsf{A}^r_{n} }\}}.
    \end{align*}
  The left side  converges to $\mathbf{1}_{\{e \in M\}}$ almost surely  by  \eqref{eq:local_weak_cons1} and  the right side converges to $\mathbf{1}_{\{ e \in \M_{\bB_{e}^r(G) \setminus \mathsf{A}^r}\}}$ almost surely by  \eqref{eq:local_weak_cons1} and \eqref{eq:local_weak_cons2}.  Consequently,
  we have 
  \[ \mathbf{1}_{\{e\in M\}} = \mathbf{1}_{\{ e \in \M_{\bB_{e}^r(G) \setminus \mathsf{A}^r}\}} \quad \text{almost surely,}  \]
  which holds for each $r \ge 1.$

  On the other hand, since $G$ satisfies the correlation decay assumption, under the law of $(G, o)$, almost surely, 
    \[
    \E_w \big |\mathbf{1}_{\{e \in \M_{\bB_{e}^r(G)}\}} - \mathbf{1}_{\{e \in \M_{\bB_{e}^r(G) \setminus \mathsf{A}_r}\}}\big| \to 0 \quad \text{as $r\to\infty$,}
    \]
    where $\E_w$ is the expectation with respect to the random weights. Thus, there exists a sequence $(r_k)$ with $r_k \to\infty$ such that almost surely,
    \[
    \big|\mathbf{1}_{\{e \in \M_{\bB_{e}^{r_k}(G)}\}} - \mathbf{1}_{\{ e \in \M_{\bB_{e}^{r_k}(G) \setminus \mathsf{A}_r}\}} \big| \to 0 \quad \text{as $k\to\infty$.}
    \]
    Furthermore, by Proposition~\ref{thm: MWM_inf_graph}(b),  $\mathbf{1}_{\{e \in \M_{\bB_{e}^{r_k}(G)}\}} \to \mathbf{1}_{\{e \in \M_G\}}$ almost surely as $k \to \infty$. Combining these observations, we obtain that 
    \[
    \mathbf{1}_{\{ e\in M\}} = \mathbf{1}_{\{e\in \M_G\}} \ \ \text{almost surely,}
    \]
    implying that $M = \M_G$ almost surely. This proves the proposition.
\end{proof}

Finally, we prove Corollary~\ref{thm: lln}.  By Proposition~\ref{lem: local_weak_convergence_matching}, $\big (G_n, o_n, (w_e)_{e\in E(G_n)}, (\mathbf{1}_{\{e \in \M_{G_n}\}})_{e \in E(G_n)} \big )$ converges locally weakly to $\big (G, o, (w_e)_{e\in E(G)}, (\mathbf{1}_{\{e \in \M_{G}\}})_{e \in E(G)}\big )$ in $\big( \mathcal{G}^*_{\mathbb{R}^2}, d^w_{\mathrm{loc}}\big)$. 

Note that
\begin{align*}
	\E \frac{\W_{G_n}}{|V(G_n)|} = \E \frac{\sum_{e \in E(G_n)} w_{e}\mathbf{1}_{\{e \in \M_{G_n}\}}}{|V(G_n)|}
	= \frac{1}{2}\E \sum_{v: v\sim o_n} w_{(o_nv)} \mathbf{1}_{\{(o_nv) \in \M_{G_n}\}}
\end{align*}
and
\begin{align*}
	\E \frac{|\M_{G_n}|}{|V(G_n)|} = \E \frac{\sum_{e \in E(G_n)} \mathbf{1}_{\{e \in \M_{G_n}\}}}{|V(G_n)|}
	= \frac{1}{2}\E \sum_{v: v\sim o_n} \mathbf{1}_{\{(o_nv) \in \M_{G_n}\}}.
\end{align*}

Since $\big (G, o, (w_e)_{e\in E(G)}, (\mathbf{1}_{\{e \in \M_{G}\}})_{e \in E(G)}\big ) \mapsto \sum_{v: v\sim o} w_{(o v)} \mathbf{1}_{\{(o v) \in \M_{G}\}}$ is continuous with respect to the local weak topology on $\mathcal{G}^*_{\mathbb{R}^2}$, by continuous mapping theorem,
\[ \sum_{v: v\sim o_n} w_{(o_nv)} \mathbf{1}_{\{(o_nv) \in \M_{G_n}\}} \stackrel{d}{\to} \sum_{v: v\sim o} w_{(o v)} \mathbf{1}_{\{(o v) \in \M_{G}\}}.\]
Similarly,
\[ \sum_{v: v\sim o_n} \mathbf{1}_{\{(o_nv) \in \M_{G_n}\}} \stackrel{d}{\to} \sum_{v: v\sim o} \mathbf{1}_{\{(o v) \in \M_{G}\}}.\]
To prove Corollary~\ref{thm: lln}, it then suffices to show that 
\begin{equation}
\label{eq: unif_int}
\Big(\sum_{v: v\sim o_n} w_{(o_nv)} \mathbf{1}_{\{(o_nv) \in \M_{G_n}\}}\Big)_{n},  \ \ \Big(\sum_{v: v\sim o_n} \mathbf{1}_{\{(o_nv) \in \M_{G_n}\}}\Big)_{n}   \quad \text{are uniformly integrable.}
\end{equation}
 Note that since $\M_{G_n}$ is a matching, there is at most one term nonzero in the sums above. Therefore, 
\[
\Big(\sum_{v: v\sim o_n} w_{(o_nv)} \mathbf{1}_{\{(o_nv) \in \M_{G_n}\}}\Big)^p = \sum_{v: v\sim o_n} |w_{(o_nv)}|^p \mathbf{1}_{\{(o_nv) \in \M_{G_n}\}} \leq \sum_{v: v\sim o_n} |w_{(o_nv)}|^p.
\]
Hence,
\[
\E \Big(\sum_{v: v\sim o_n} w_{(o_nv)} \mathbf{1}_{\{(o_nv) \in \M_{G_n}\}}\Big)^p \leq \E |w_e|^p \E d_{o_n} =  \E |w_e|^p \E \tfrac{2|E(G_n)|}{|V(G_n)|},
\]
which is bounded above by a constant by the hypothesis. Similarly,
\[
\E \Big(\sum_{v: v\sim o_n}  \mathbf{1}_{\{(o_nv) \in \M_{G_n}\}}\Big)^p \leq   \E \tfrac{2|E(G_n)|}{|V(G_n)|} = O(1).
\]
Therefore, \eqref{eq: unif_int} holds, and this proves Corollary~\ref{thm: lln}.

\appendix

\section{Local weak convergence}
\label{sec: local_weak_convergence}
In this appendix, we recall the definition of local weak convergence. A rooted graph is a pair $(G, o)$, where $G$ is a locally finite graph, and $o\in V(G)$. Write $\mathcal{G}^*$ for the space of isomorphism classes of rooted connected graphs. Define a metric on $\mathcal{G}^*$ as follows:
\[
d_{\mathrm{loc}}((G_1, o_1), (G_2, o_2)) = \frac{1}{R+1},
\]
where
\begin{align*}
	R = \sup\left\{r \ge 0: 
	\begin{array}{l}
		\text{$\exists$ a rooted graph isomorphism $\phi$ between $(\bB_{o_1}^r(G_1), o_1)$ and $(\bB_{o_2}^r(G_2), o_2)$} 
	\end{array}
	\right\}.
\end{align*}
Then $(\mathcal{G}^*, d_{\mathrm{loc}})$ is a Polish space (see \cite[Theorem~A.8]{vdHofstad}). Under this metric, we say that the sequence of finite random rooted graphs $(G_n,o_n)$, where conditional on $G_n$, the root $o_n$ is chosen uniformly from $V(G_n)$, converges locally weakly to a random rooted graph $(G,o)$ with law $\mu$, if for every bounded continuous $h:\mathcal{G}^*\to\R$,
\[
\E h(G_n,o_n)\to \E_\mu h(G,o) \quad \text{ as } n \to \infty.
\]
Let $\mathcal{G}^{**}$ be the space of locally finite doubly rooted graphs up to rooted isomorphism and equip it with its natural local topology. A probability measure $\mu$ on $\mathcal{G}_*$ is unimodular if, for each $f: \mathcal{G}^{**} \to \R$ bounded measurable,
\[
\int \sum_{v\in V(G)} f(G, o, v)~\mathrm{d}\mu(G, o) = \int \sum_{v\in V(G)} f(G, v, o)~\mathrm{d}\mu(G, o).
\]
A graph $(G, o)$ sampled from a unimodular measure is called a unimodular graph. 
A finite random graph with uniformly chosen root is unimodular. It is well-known that the class of unimodular measures is closed under local weak limit. In particular, if the random graph $(G, o)$ arises as a local weak limit, then it is unimodular.

The notion of local weak convergence can naturally be extended to edge-weighted graphs. A weighted rooted graph is a triple $(G, o, (w_e)_{e\in E(G)})$, where $G$ is a locally finite graph, $o\in V(G)$, and for each  $e\in E(G),$ $w_e \in \mathbb{R}^k$ is the edge weight associated with  $e$. Write $\mathcal{G}^*_{\mathbb{R}^k}$ for the space of isomorphism classes of rooted connected weighted graphs. We define the local metric on this space as 
\[
d_{\mathrm{loc}}^w\big((G_1, o_1, (w_e^1)_{e\in E(G_1)}), (G_2, o_2, (w_e^2)_{e\in E(G_2)})\big) = \frac{1}{R+1},
\]
where
\begin{align*}
	R = \sup\left\{r \ge 0: 
	\begin{array}{l}
		\text{$\exists$ a rooted graph isomorphism $\phi$ between $(\bB_{o_1}^r(G_1), o_1)$}\\
		\text{and $(\bB_{o_2}^r(G_2), o_2)$ such that $\sqrt{\sum_{e \in E(\bB_{o_1}^r(G_1))} \|w_e^1 - w_{\phi(e)}^2\|_{2}^2} \leq 2^{-r}$} 
	\end{array}
	\right\}.
\end{align*}
Note that $d_{\mathrm{loc}}^w$ is a metric on $\mathcal{G}^*_{\mathbb{R}^k}$ that makes  $(\mathcal{G}^*_{\mathbb{R}^k},d_{\mathrm{loc}}^w)$ Polish. 
Using this metric, the local weak convergence of a sequence of finite weighted rooted random graphs  $(G_n, o_n, (w_e)_{e\in E(G_n)})$ to $(G, o, (w_e)_{e\in E(G)})$  can be defined similarly.

If $(G_n, o_n)$ converges to  $(G, o)$ locally weakly and 
if the edge weights $(w^{(n)}_e)_{e \in E(G_n)}, (w_e)_{e \in E(G)} $ are drawn i.i.d.\ from a fixed common distribution on $\mathbb{R}^k$, then we have local weak convergence for the corresponding weighted random graphs as well. This is a consequence of the following standard lemma whose proof is omitted. 
\begin{lem} \label{lem: local_weak_convergence}
Suppose $(G_n,o_n)$ converges to $(G,o)$ locally weakly. For each $n$, assign to every edge $e\in E(G_n)$ an independent weight $w_e^{(n)}$ with common law $\nu_n$ on $\R^k$, independently of $G_n$. If $\nu_n \to \nu$ converges weakly, then
\[
(G_n,o_n,(w_e^{(n)})_{e\in E(G_n)})  \text{ converges to } (G,o,(w_e)_{e\in E(G)}) \ \ \text{locally weakly on  } \mathcal{G}^*_{\mathbb{R}^k},
\]
where $(w_e)_{e\in E(G)}$ are i.i.d.\ with law $\nu$, independent of $(G,o)$. In particular, if $\nu_n =  \nu$ for all $n$, the conclusion holds.
\end{lem}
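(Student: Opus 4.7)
} The plan is to build, via two applications of the Skorokhod representation theorem, a single probability space on which the weighted rooted graphs $(G_n,o_n,(w_e^{(n)})_{e\in E(G_n)})$ converge almost surely to $(G,o,(w_e)_{e\in E(G)})$ in the metric $d_{\mathrm{loc}}^w$, which is stronger than local weak convergence.

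First, since $(\mathcal{G}^*, d_{\mathrm{loc}})$ is Polish, Skorokhod lets us put all the random rooted graphs on a common probability space with $(G_n,o_n)\to (G,o)$ almost surely in $d_{\mathrm{loc}}$. On a full-measure event, for every $r\ge 1$ there exists a random $N_r$ such that for all $n\ge N_r$ a rooted isomorphism $\phi_n^r:\bB_o^r(G)\to \bB_{o_n}^r(G_n)$ exists. For each $n$, choose any isomorphism on the largest ball for which one exists and let $\phi_n^r$ be its restriction to $\bB_o^r(G)$; this makes the isomorphisms consistent in $r$, so each edge of $G$ is eventually identified with a unique edge of $G_n$. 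Separately, since $\R^k$ is Polish and $\nu_n\to \nu$ weakly, Skorokhod combined with taking independent copies produces i.i.d.\ sequences $(Z_j^{(n)})_{j\ge 1}\sim \nu_n$ and $(Z_j)_{j\ge 1}\sim \nu$ on a common probability space with $Z_j^{(n)}\to Z_j$ a.s.\ for each fixed $j$; take this weight coupling independent of the graph coupling.

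On the graph realization, measurably enumerate $E(G)=\{e_1,e_2,\dots\}$ (for instance by breadth-first search from $o$, with a canonical tie-breaking) and set $w_{e_j}:=Z_j$. For each $n,j$ such that $e_j\in E(\bB_o^r(G))$ and $n\ge N_r$, set $w^{(n)}_{\phi_n^r(e_j)}:=Z_j^{(n)}$, and assign independent fresh $\nu_n$-variables to the edges of $G_n$ not covered by any $\phi_n^r$. Conditioning on the graph sequence, this construction produces the correct joint laws: $(w_e)_{e\in E(G)}$ is i.i.d.\ $\nu$ and independent of $(G,o)$, and $(w_e^{(n)})_{e\in E(G_n)}$ is i.i.d.\ $\nu_n$ and independent of $(G_n,o_n)$.

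Finally, for each fixed $r\ge 1$ and all $n$ sufficiently large, $\bB_o^r(G)$ has finitely many edges and each Skorokhod pair satisfies $w^{(n)}_{\phi_n^r(e)}\to w_e$ a.s., whence
\[
\sum_{e\in E(\bB_o^r(G))}\bigl\|w^{(n)}_{\phi_n^r(e)}-w_e\bigr\|_2^2 \;\xrightarrow[n\to\infty]{\mathrm{a.s.}}\; 0.
\]
In particular this sum is eventually below $4^{-r}$, which gives $d_{\mathrm{loc}}^w\bigl((G_n,o_n,(w_e^{(n)})),(G,o,(w_e))\bigr)\le 1/(r+1)$ for all large $n$. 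Letting $r\to\infty$ yields almost sure convergence in $d_{\mathrm{loc}}^w$, and hence local weak convergence on $\mathcal{G}^*_{\R^k}$. The main obstacle is the bookkeeping: one has to choose a measurable edge enumeration on the random graph $G$ and propagate it through the random isomorphisms $\phi_n^r$, then verify that after integrating out the auxiliary Skorokhod variables the induced weights on $G_n$ are really i.i.d.\ $\nu_n$ and independent of $G_n$. This is cleanest if one conditions on the graph realization at the outset and then uses the product structure of the weight coupling; once it is set up, the finite-neighborhood bound above does the rest.
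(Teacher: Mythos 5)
Your argument is correct. The paper does not actually supply a proof of this lemma (it is stated as "a standard lemma whose proof is omitted"), and your double Skorokhod coupling --- first for the rooted graphs in $(\mathcal{G}^*,d_{\mathrm{loc}})$, then for countably many independent copies of a $\nu_n\to\nu$ coupling, followed by transporting the weights through a single isomorphism on the largest matched ball and checking the finite-ball sum against the $4^{-r}$ threshold --- is exactly the standard way to prove it. The one genuinely delicate point, namely that relabeling an i.i.d.\ family through a graph-dependent injection still yields weights that are i.i.d.\ $\nu_n$ and independent of $G_n$ after conditioning on the graph realization, is correctly identified and handled in your write-up.
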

The unimodularity can be analogously defined for weighted random graphs.

\bibliographystyle{plain}
\bibliography{matchref}

\end{document}